\documentclass[a4paper,11pt,reqno]{amsart}
\usepackage{amssymb}
\usepackage{amsmath}
\usepackage{ifthen}
\usepackage{enumitem}
\usepackage[dvips]{graphicx}
\usepackage[a4paper,margin=0.85in]{geometry}
\nonstopmode \numberwithin{equation}{section}
\usepackage{amssymb}
\usepackage{ifthen}

\usepackage{amsmath}
\usepackage[T1]{fontenc} %skandit
\usepackage{comment}

\nonstopmode \numberwithin{equation}{section}
\theoremstyle{plain}
\newtheorem{thm}{Theorem}
\numberwithin{thm}{section}
\newtheorem{cor}{Corollary}
\numberwithin{cor}{section}
\newtheorem{lem}{Lemma}
\numberwithin{lem}{section}
\newtheorem{prop}{Proposition}

\newtheorem{conj}{Conjecture}

\theoremstyle{definition}
\newtheorem{defn}{Definition}[section]

\newtheorem{prob}{Problem}
\newtheorem{rem}{Remark}[section]

\newcounter{minutes}
\divide\time by 60
\newcounter{hours}
\multiply\time by 60
\addtocounter{minutes}{-\time}
\newcounter {own}
\def\theown {\thesection       .\arabic{own}}

\newenvironment{pf}[1][]{%
	\vskip 3mm
	\noindent
	\ifthenelse{\equal{#1}{}}%
	{{\slshape Proof. }}%
	{{\slshape #1.} }%
}%
{\qed\bigskip}

\theoremstyle{plain}
\newtheorem{Thm}{Theorem}

\numberwithin{equation}{section}
\def\be{\begin{equation}}
	\def\ee{\end{equation}}

\newcommand{\bee}{\begin{enumerate}}
	\newcommand{\eee}{\end{enumerate}}

\newcommand{\blem}{\begin{lem}}
	\newcommand{\elem}{\end{lem}}
\newcommand{\bthm}{\begin{thm}}
	\newcommand{\ethm}{\end{thm}}
\newcommand{\bcor}{\begin{cor}}
	\newcommand{\ecor}{\end{cor}}
\newcommand{\beg}{\begin{examp}}
	\newcommand{\eeg}{\end{examp}}
\newcommand{\begs}{\begin{examples}}
	\newcommand{\eegs}{\end{examples}}
\allowdisplaybreaks
\newcommand{\bdefn}{\begin{defn}}
	\newcommand{\edefn}{\end{defn}}

\newcommand{\bprob}{\begin{prob}}
	\newcommand{\eprob}{\end{prob}}
\newcommand{\bei}{\begin{itemize}}
	\newcommand{\eei}{\end{itemize}}

\newcommand{\bcon}{\begin{conj}}
	\newcommand{\econ}{\end{conj}}
\newcommand{\bcons}{\begin{conjs}}
	\newcommand{\econs}{\end{conjs}}
\newcommand{\bprop}{\begin{prop}}
	\newcommand{\eprop}{\end{prop}}
\newcommand{\br}{\begin{rem}}
	\newcommand{\er}{\end{rem}}
\newcommand{\brs}{\begin{rems}}
	\newcommand{\ers}{\end{rems}}
\newcommand{\bo}{\begin{obser}}
	\newcommand{\eo}{\end{obser}}
\newcommand{\bos}{\begin{obsers}}
	\newcommand{\eos}{\end{obsers}}
\newcommand{\bpf}{\begin{pf}}
	\newcommand{\epf}{\end{pf}}
\newcommand{\ba}{\begin{array}}
	\newcommand{\ea}{\end{array}}
\newcommand{\beq}{\begin{eqnarray}}
	\newcommand{\beqq}{\begin{eqnarray*}}
		\newcommand{\eeq}{\end{eqnarray}}
	\newcommand{\eeqq}{\end{eqnarray*}}

\begin{document}
	\title{Riesz Theorem and Riesz-Fej\'er inequality for weighted harmonic Bergman spaces with applications to M\"obius invariant spaces}

	\author{Himadri Halder}
	\address{Himadri Halder,
		Department of Mathematics and Computing,
		Indian Institute of Technology (ISM) Dhanbad,
		Dhanbad-826004, Jharkhand, India.}
	\email{himadrihalder@iitism.ac.in}
	
	\author{Rohit Kumar}
	\address{Rohit Kumar,
		Department of Mathematics,
		Indraprastha Institute of Information Technology, Delhi,
		New Delhi-110020, India.}
	\email{rohitk12798@gmail.com}

	\subjclass[{AMS} Subject Classification:]{Primary 31A05, 30H20, 30C62}
	\keywords{Riesz conjugate theorem, Riesz--Fej\'er inequality, weighted harmonic Bergman spaces, M\"obius invariant spaces, harmonic quasiregular mappings }
	
%	\def\thefootnote{}
%	\footnotetext{ {\tiny File:~\jobname.tex,
%			printed: \number\year-\number\month-\number\day,
%			\thehours.\ifnum\theminutes<10{0}\fi\theminutes }
%	} \makeatletter\def\thefootnote{\@arabic\c@footnote}\makeatother

	\begin{abstract}
	The aim of this paper is twofold. First, we establish a Riesz conjugate theorem for weighted harmonic Bergman spaces. More precisely, we prove that if $f=u+iv$ is a harmonic $K$-quasiregular mapping in $\mathbb{D}$ and the real part $u$ belongs to the weighted harmonic Bergman space $a_\alpha^p$, $0<p<\infty$, then the imaginary part $v$ also belongs to the same space, together with a quantitative norm estimate. Moreover, for $1<p<\infty$, the corresponding constant is shown to be independent of the weight parameter $\alpha$. Second, we establish Riesz--Fej\'er inequalities for weighted harmonic Bergman spaces for $1<p<\infty$.
	In the special case $p=2$, we further improve the corresponding constant by using the Hilbert space structure and orthogonality techniques. As applications of our main results, we establish Riesz conjugate theorems and Riesz--Fej\'er inequalities for the M\"obius invariant spaces $Q(n,p,\alpha)$ introduced by Zhu [Illinois J. Math. 51 (2007), pp. 977--1002] and their harmonic counterparts $Q_h(n,p,\alpha)$ introduced by Sun, Liu, and Wang [Potential Anal. 65 (2026), Article no. 12].		
	\end{abstract}
	
	\maketitle
	\pagestyle{myheadings}
	\markboth{Himadri Halder and  Rohit Kumar}{Riesz Theorem and Riesz--Fej\'er inequality for weighted harmonic Bergman space}
	
	\section{\textbf{Introduction and Main Results}}
	Let $\mathbb{D} = \{z \in \mathbb{C} : |z| < 1\}$ denote the open unit disk in the complex plane, bounded by the unit circle $\mathbb{T} = \{z \in \mathbb{C} : |z| = 1\}$. For $0 < p < \infty$, the growth and boundary behavior of a complex-valued function $f$ defined on $\mathbb{D}$ are classically characterized by its $p$-th integral mean, defined for $0 \le r <1$ by
	$$M_p(r, f) := \left( \frac{1}{2\pi} \int_{0}^{2\pi} |f(re^{i\theta})|^p \, d\theta \right)^{1/p}.$$	
	 For $0 < p < \infty$, an analytic function $f$ in $\mathbb{D}$ belongs to the Hardy space $H^p$ if its $p$-th integral mean remains uniformly bounded as it approaches the boundary. That is, $$\|f\|_{H^p} := \lim_{r \to 1^-} M_p(r, f) = \sup_{0 < r < 1} M_p(r, f) < \infty$$
	 for every $f \in H^{p}$.
	 \vspace{1mm}
	 
	 Let $dA(z)$ denote the standard area measure on $\mathbb{D}$, normalized so that the total area of the disk is unity, defined by $$dA(z) = \frac{dx\,dy}{\pi} = \frac{1}{\pi} r\,dr\,d\theta, \quad z = x+iy = re^{i\theta}.$$For parameters $-1 < \alpha < \infty$, the standard weighted area measure on $\mathbb{D}$ is defined by $dA_\alpha(z) = (\alpha + 1)(1 - |z|^2)^\alpha \, dA(z)$. 
	 \vspace{1mm}
	 
	 When $0 < p < \infty$, an analytic function $f$ in $\mathbb{D}$ belongs to the weighted Bergman space $A_\alpha^p$ if it satisfies
	 $$\|f\|_{A_\alpha^p} := \left( \int_{\mathbb{D}} |f(z)|^p  \, dA_{\alpha}(z) \right)^{1/p} < \infty.$$
	 In particular, for $\alpha=0$, this simplifies to the standard Bergman space $A^p$. 
	 \vspace{1mm}
	 
	 It is well known that $H^p \subset A^p$. We refer the reader to Duren's monograph \cite{Duren-Hp-book} for a comprehensive treatment of Hardy spaces, and to the books by Duren \cite{Duren-Schuster-AMS-2004} and Hedenmalm, Korenblum, and Zhu \cite{Hedenmalm-Korenblum-Zhu-GTM} for the theory of Bergman spaces.
	 \vspace{1mm}
	 
	 A complex-valued function $f = u + iv$ on $\mathbb{D}$ is harmonic if its real and imaginary parts are real-valued harmonic functions. The harmonic Hardy space $h^p$ consists of all complex-valued harmonic functions $f$ on $\mathbb{D}$ satisfying $$\|f\|_{h^p}: = \sup_{0 < r < 1} M_p(r, f) < \infty.$$
	 For $-1<\alpha<\infty$ and $0<p<\infty$, the weighted harmonic Bergman space $a_\alpha^p$ consists of all complex-valued harmonic functions $f$ on $\mathbb{D}$ for which the following integral converges $$\|f\|_{a_\alpha^p} := \left( \int_{\mathbb{D}} |f(z)|^p  \, dA_{\alpha}(z) \right)^{1/p} < \infty.$$
	 For $\alpha=0$, we simply write $a_0^p := a^p$, the usual harmonic Bergman space. Furthermore, one has $h^p \subset a^p$.
	 \vspace{1mm}
	 
	The relationship between a real-valued harmonic function $u$ and its unique normalized harmonic conjugate $v$, with $v(0)=0$, is fundamental in geometric function theory. Formulated as an operator theory problem, we ask whether a target harmonic space is invariant under the conjugation operator $T: u \mapsto v$. To facilitate the discussion, we first consider the following spaces of real-valued harmonic functions: 
	$$
	\mathcal{H}_h^{p}(\mathbb{D};\mathbb{R})
	=
	\left\{
	u:\mathbb{D}\to\mathbb{R} :
	u \text{ is harmonic and }
	\|u\|_{h^{p}}<\infty
	\right\}
	$$
	 and 
	 $$
	 \mathcal{A}_{h,\alpha}^{p}(\mathbb{D};\mathbb{R})
	 =
	 \left\{
	 u:\mathbb{D}\to\mathbb{R} :
	 u \text{ is harmonic and }
	 \|u\|_{a_\alpha^{p}}<\infty
	 \right\}.
	 $$
	 Set $\mathcal{A}_{h}^{p}(\mathbb{D};\mathbb{R}):=\mathcal{A}_{h,0}^{p}(\mathbb{D};\mathbb{R})$. Then both $\mathcal{H}_h^{p}(\mathbb{D};\mathbb{R})$ and $\mathcal{A}_{h,\alpha}^{p}(\mathbb{D};\mathbb{R})$ are naturally real Banach spaces for $1 \leq p < \infty$ and real quasi-Banach spaces for $0<p<1$. Since $\mathbb{D}$ is simply connected, every function
	 $u \in \mathcal{H}_h^{p}(\mathbb{D};\mathbb{R})\, (\mbox{or}\, \mathcal{A}_{h,\alpha}^{p}(\mathbb{D};\mathbb{R}))$
	 admits a unique harmonic conjugate $v$, normalized by
	 $v(0)=0$. Thus, the harmonic conjugation operator
	 \[
	 T:\mathcal{H}_h^{p}(\mathbb{D};\mathbb{R})\, \, (\mbox{or}\, \mathcal{A}_{h,\alpha}^{p}(\mathbb{D};\mathbb{R}))
	 \longrightarrow
	 \mathcal{H}_h^{p}(\mathbb{D};\mathbb{R})\, (\mbox{or} \, \, \mathcal{A}_{h,\alpha}^{p}(\mathbb{D};\mathbb{R}))
	 \quad \mbox{by} \,\,
	 T(u)=v,
	 \]
	 is well defined. Moreover, $T$ is a real-linear operator.
	 \vspace{1mm}
	 
	 For Hardy spaces, the operator $T$ is famously bounded by the well-known Riesz conjugate theorem. 
	 
	\begin{Thm}\cite[Theorem 4.1]{Duren-Hp-book}\label{thm-A}
	Let $1 < p < \infty$. If $u \in \mathcal{H}_h^{p}(\mathbb{D};\mathbb{R})$, then its harmonic conjugate $v$ belongs to $\mathcal{H}_h^{p}(\mathbb{D};\mathbb{R})$. There exists a constant $A_p > 0$ such that:
	$$M_p(r, v) \le A_p M_p(r, u).$$	
	\end{Thm}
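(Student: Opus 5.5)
The plan is to prove the case $1<p\le 2$ by a subharmonicity argument, pass to $2\le p<\infty$ by duality, and then remove the normalization $v(0)=0$ at the end.

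\textbf{Case $1<p\le 2$.} Set $F=u+iv$, which is analytic in $\mathbb{D}$ with $v(0)=0$. First assume $u>0$, and extend afterwards. Then $F$ has no zeros in $\mathbb{D}$, so $G=F^p$ is analytic, with $p\arg F\in(-p\pi/2,p\pi/2)$. The key pointwise inequality is
\[
|F|^p\le A\,u^p - B\,\mathrm{Re}(F^p),
\]
valid on the sector $|\arg F|<\pi/2$. It holds with $B=1/|\cos(p\pi/2)|$ when $p$ is near $2$. For general $1<p<2$ I would use the Pichorides-type inequality $|\sin\phi|^p\le A_p|\cos\phi|^p - B_p\cos(p\phi)$ for $|\phi|<\pi/2$.

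Next integrate over $|z|=r$. By the mean value property, $\int \mathrm{Re}(F^p)\,d\theta/2\pi=\mathrm{Re}\,F(0)^p=u(0)^p\ge0$. This gives $M_p(r,v)^p\le M_p(r,F)^p\le A\,M_p(r,u)^p$.

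To drop the assumption $u>0$, I would use the cleaner route that avoids splitting $u$ into $u^+-u^-$. Choose a smooth convex $\Phi(x,y)\le A|x|^p$ minus $c|y|^p$ that is subharmonic in the plane. The Burkholder/Pichorides function $\Phi(x,y)=A|x|^p-|y|^p$ majorized by $-c\,\mathrm{Re}((|x|+iy)^p)$, which is subharmonic, serves for this. Then $\Phi(u,v)$ is subharmonic, since $\Phi\circ F$ is subharmonic for analytic $F$. Its circle means are therefore at least $\Phi(u(0),0)\ge0$, and this yields $M_p(r,v)\le A_p M_p(r,u)$.

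\textbf{Case $p>2$.} Use duality. The conjugation operator on circles $|z|=r$ is convolution with the conjugate Poisson kernel at radius $r$, i.e. the conjugate function of the boundary values $u_r(\theta)=u(re^{i\theta})$. The conjugate function operator $\tilde{}$ is anti-self-adjoint on $L^2(\mathbb{T})$: $\int \tilde f g=-\int f\tilde g$. So for $g$ with $\|g\|_{p'}\le1$ we get $|\int \tilde u_r g|=|\int u_r\tilde g|\le \|u_r\|_p A_{p'}\|g\|_{p'}$, and hence $A_p\le A_{p'}$. Here one should restrict to the mean-zero part, or note that $\tilde g$ kills constants. Since $u_r$ is a trigonometric polynomial limit (smooth), all manipulations are justified.

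The main obstacle is the elementary inequality for the subharmonic majorant in the range $1<p<2$: producing a subharmonic $\Phi$ with $|y|^p\le A|x|^p-\Phi(x,y)$ and $\Phi(x,0)\ge0$. My first choice is $\Phi=\mathrm{Re}\bigl((|x|+iy)^p\bigr)/|\cos(p\pi/2)|$. It is harmonic off the line $x=0$, and it is subharmonic across that line because of the kink in $|x|$ (check the sign of the jump in the normal derivative). I would verify the inequality by homogeneity, reducing it to one variable $\phi\in[-\pi/2,\pi/2]$.
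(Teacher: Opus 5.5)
The paper gives no proof of this statement; it is quoted from Duren's monograph. The classical proof there first treats positive $u$, then reduces a general $u\in h^p$ to that case by writing $u$ as the difference of the Poisson integrals of the positive and negative parts of its boundary function, and obtains $p>2$ by duality.

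For $1<p\le 2$ you take a different route, Pichorides' subharmonic-majorant method. Compared with Duren's proof:
\begin{itemize}
\item It works directly on each circle for an arbitrary real $u$.
\item It needs no boundary-value theory of $h^p$ and no splitting, so there is no loss of a factor $2$.
\item With optimal constants it gives the sharp value $A_p=\tan\frac{\pi}{2p}$.
\end{itemize}
The price is proving that $\mathrm{Re}\bigl((|x|+iy)^p\bigr)$ is subharmonic, plus a one-variable trigonometric inequality.

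Several of your steps are sound as stated:
\begin{itemize}
\item The subharmonicity is correct for $1<p\le 2$. On the imaginary axis the right-hand $x$-derivative is $p|y|^{p-1}\cos\frac{(p-1)\pi}{2}\ge 0$, so the even reflection in $x$ has a nonnegative jump in its normal derivative.
\item Composing with the analytic $F$ and bounding the circle means below by $\Phi(u(0),0)\ge 0$ is correct.
\item The duality step for $p>2$ works if you test against real trigonometric polynomials $g$, which are dense in $L^{p'}$, and use $v(re^{i\theta})=\widetilde{u_r}(\theta)$. For such $g$, the bound $\|\tilde g\|_{p'}\le A_{p'}\|g\|_{p'}$ follows from your circle estimate applied to the harmonic polynomial with boundary values $g$.
\end{itemize}

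There is one concrete error to fix. The normalization $B=1/|\cos(p\pi/2)|$ appears both in your sector inequality for $u>0$ and in your first choice of $\Phi$. It fails for \emph{every} $1<p<2$, not only away from $p=2$. To see this, put $\psi(\phi)=A\cos^p\phi-B\cos(p\phi)-\sin^p\phi$ on $[0,\pi/2]$. With this $B$ you get $\psi(\pi/2)=0$ and $\psi'(\pi/2)=Bp\sin(p\pi/2)>0$, because the other two terms have zero derivative at $\pi/2$ when $p>1$. Hence $\psi<0$ just inside the sector, however large $A$ is. The $|F|^p$ version fails in the same way.

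The fix is to take any $B>1/|\cos(p\pi/2)|$:
\begin{itemize}
\item Then $\psi(\pi/2)>0$, so $\psi>0$ on some interval $[\phi_0,\pi/2]$ for every $A\ge 0$.
\item Choosing $A\ge(1+B)/\cos^p\phi_0$ handles $[0,\phi_0]$.
\item Pichorides' optimal pair is $A=\tan^p\frac{\pi}{2p}$, $B=\sin^{p-1}\frac{\pi}{2p}/\cos\frac{\pi}{2p}$.
\end{itemize}
Rescaling $\Phi$ by a larger positive constant preserves subharmonicity and keeps $\Phi(x,0)=B|x|^p\ge 0$, so the rest of your argument is unchanged.

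Two minor remarks. First, $\Phi$ is neither smooth nor convex; only subharmonicity is used. Second, the normalization $v(0)=0$ can be dropped from the membership claim $v\in h^p$, but not from the inequality: take $u\equiv 0$, $v\equiv 1$.
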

	Theorem \ref{thm-A} breaks down completely at the endpoints $p=1$ and $p=\infty$ (see \cite[p. 56]{Duren-Hp-book}). A partial substitute was later obtained by Kolmogorov, who proved that if $u\in \mathcal{H}_h^{1}(\mathbb{D};\mathbb{R})$, then its harmonic conjugate $v$ belongs to $\mathcal{H}_h^{p}(\mathbb{D};\mathbb{R})$ for every $0<p<1$, although $v$ need not belong to $\mathcal{H}_h^{1}(\mathbb{D};\mathbb{R})$. The situation is quite different when $0<p<1$. Indeed, Hardy and Littlewood  \cite{Hardy-Littlewood-JRAM-1932} showed that, in general, the harmonic conjugate of a function in
	$\mathcal{H}_h^{p}(\mathbb{D};\mathbb{R})$ need not belong to $\mathcal{H}_h^{q}(\mathbb{D};\mathbb{R})$ for any $q>0$. Equivalently, the harmonic conjugation operator $T$ does not map $\mathcal{H}_h^{p}(\mathbb{D};\mathbb{R})$ into
	$\mathcal{H}_h^{q}(\mathbb{D};\mathbb{R})$ for any $q>0$. 
	\vspace{1mm}
	
In contrast, Hardy and Littlewood \cite{Hardy-Littlewood-JRAM-1932} showed that area-based integration has a smoothing effect, making the harmonic conjugation operator well behaved even for smaller values of $p$. 

\begin{Thm} \cite[Theorem 5]{Hardy-Littlewood-JRAM-1932} \label{thm-B}
	Let $0 < p < \infty$. If $u$ belongs to the Bergman space $\mathcal{A}_{h}^{p}(\mathbb{D};\mathbb{R})$, then its harmonic conjugate $v$ must also belong to $\mathcal{A}_{h}^{p}(\mathbb{D};\mathbb{R})$, satisfying $\|v\|_{a^p} \le C_p \|u\|_{a^p}$ for some constant $C_p>0$.
\end{Thm}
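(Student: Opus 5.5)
The plan is to establish the estimate first for $1<p<\infty$, where it follows from the circle version, and then to treat $0<p\le 1$ separately using the subharmonic behaviour of $|f|^p$ for $f=u+iv$.

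\emph{Step 1 ($1<p<\infty$).} By Theorem~\ref{thm-A}, for every $0<r<1$ we have $M_p(r,v)\le A_p M_p(r,u)$. This needs no global $h^p$ hypothesis. The function $u_r(z)=u(rz)$ is harmonic on a neighbourhood of $\overline{\mathbb{D}}$, so it lies in $\mathcal{H}_h^p(\mathbb{D};\mathbb{R})$, and its normalized conjugate is $v_r$. Integrating in polar coordinates against the weight $2(\alpha+1)(1-r^2)^\alpha r\,dr$ gives
\[
\|v\|_{a^p_\alpha}^p=\int_0^1 2(\alpha+1)(1-r^2)^\alpha M_p^p(r,v)\,r\,dr\le A_p^p\|u\|_{a^p_\alpha}^p .
\]
In particular the constant is independent of $\alpha$, and the case $\alpha=0$ is the statement.

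\emph{Step 2 ($0<p\le1$).} Here Theorem~\ref{thm-A} is unavailable, so I would work with $f=u+iv$, which is analytic with $f(0)=u(0)$. The aim is the inequality
\[
\int_{\mathbb{D}}|f|^p\,dA\le C_p\int_{\mathbb{D}}|u|^p\,dA,
\]
from which $|v|\le|f|$ gives the result.

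\emph{Step 2a (pointwise derivative bound).} Fix $z$ and set $\delta=(1-|z|)/2$. From the Poisson/Cauchy representation on the disc $D(z,\delta)$, $|f'(z)|=|\nabla u(z)|\lesssim \delta^{-1}\sup_{D(z,\delta)}|u|$. Since $|u|^p$ is subharmonic for $p\ge1$ only, for $p<1$ I would instead use the Hardy--Littlewood--Fefferman--Stein lemma: for every $p>0$ a harmonic $u$ satisfies
\[
|u(w)|^p\le C_p\,\frac{1}{|D(w,\rho)|}\int_{D(w,\rho)}|u|^p\,dA .
\]
I take this as known, since it is the key input. Combining the two bounds,
\[
|f'(z)|^p\lesssim (1-|z|)^{-p-2}\int_{D(z,3\delta/2)}|u|^p\,dA .
\]

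\emph{Step 2b (integration).} I would use the Bergman space characterization for analytic $f$,
\[
\int_{\mathbb{D}}|f|^p\,dA\asymp |f(0)|^p+\int_{\mathbb{D}}|f'(z)|^p(1-|z|)^p\,dA(z),
\]
which holds for all $0<p<\infty$. Inserting the bound from Step 2a and applying Fubini gives $\int|f'|^p(1-|z|)^p\,dA\lesssim\int|u|^p\,dA$. This works because the discs $D(z,3\delta/2)$ have bounded overlap, with a factor $(1-|w|)^{-2}$ cancelled by the area of $\{z: w\in D(z,3\delta/2)\}\asymp(1-|w|)^2$. Finally, $|f(0)|^p=|u(0)|^p\lesssim\|u\|_{a^p}^p$ by the same mean value inequality applied on $D(0,1/2)$.

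\emph{Main obstacle.} The difficulty is the range $p<1$. Both the subharmonic-type mean value inequality for $|u|^p$ and the derivative characterization of $A^p$ must be used there, and each is a nontrivial classical fact. If I wanted to avoid the derivative characterization, I would try first to prove $M_p(r,f)\lesssim$ a weighted average of $M_p(s,u)$ over $s\in(r,(1+r)/2)$ and then integrate. However, I expect the Littlewood--Paley route above to be the cleaner one.
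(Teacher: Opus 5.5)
Your proof is correct. The paper only cites this result and does not prove it, but its proof of Theorem~\ref{thm-1.1} with $K=1$, $\alpha=0$ follows essentially your route: for $p>1$, Riesz's theorem on circles integrated radially; for $0<p\le1$, the gradient bound combined with the Fefferman--Stein mean value inequality, then Fubini with the $(1-|w|)^{2}$ area cancellation, then the derivative characterization of Bergman spaces. The differences are minor: you use Euclidean discs of radius $\asymp 1-|z|$ where the paper transports the estimate at $0$ by $\sigma_w$ and uses pseudo-hyperbolic discs, and you keep the $|f(0)|^p=|u(0)|^p$ term explicitly, which is more careful than the paper's ``without loss of generality $f(0)=0$''.
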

Thus, unlike the Hardy space case, Theorem \ref{thm-B} establishes that the conjugation operator $T$ is bounded on weighted Bergman space $\mathcal{A}_{h}^{p}(\mathbb{D};\mathbb{R})$ for all $0<p<\infty$. It is worth mentioning that, for the classical weighted Bergman spaces $\mathcal{A}_{h,\alpha}^{p}(\mathbb{D};\mathbb{R})$ the boundedness of the harmonic conjugation operator is an immediate consequence of \cite[Corollary 6]{Pelaez-Rattya-AMP-2020}.
\vspace{1mm}
\subsection{Riesz conjugate theorem for weighted harmonic Bergman space}
One may naturally ask whether Theorems \ref{thm-A} and \ref{thm-B} remain valid for complex-valued harmonic functions. More precisely, let $f=u+iv$ be a complex-valued harmonic function  in $\mathbb{D}$ with real part $u \in h^p$. Does it follow that $v \in h^p$ for $1<p<\infty$? Likewise, if $u \in a^p$, does one necessarily have $v \in a^p$ for all $0<p<\infty$. In general, the answer to both questions is negative (see \cite{Astala-Koskela-PAMQ-2011,Chen-Huang-Wang-Xiao-arXiv-2025,Das-Rasila-PA-2026,Liu-Zhu-AdvMath-2023}). For the Hardy space setting, Liu and Zhu \cite{Liu-Zhu-AdvMath-2023} showed that the Riesz conjugate theorem holds provided that the harmonic mapping $f$ is quasiregular. This result was subsequently sharpened by Chen {\it et al.} \cite{Chen-Huang-Wang-Xiao-arXiv-2025}. For the Bergman space setting $a^p$, an analogous sufficient condition was recently obtained by Das and Rasila \cite{Das-Rasila-PA-2026}. We next recall the notions of sense-preserving harmonic mappings, quasiconformal mappings, and quasiregular mappings.
\vspace{1mm}

Every complex-valued harmonic function in $\mathbb{D}$ admits a canonical, unique decomposition of the form $f = h + \overline{g}$, where $h$ and $g$ are analytic in $\mathbb{D}$ and are uniquely determined by the normalization $g(0) = 0$. Such functions $f$ in $\mathbb{D}$ is said to be {\it sense-preserving} if its Jacobian
$J_f(z):=|h'(z)|^2-|g'(z)|^2$ is positive in $\mathbb{D}$, or equivalently, $|g'(z)|<|h'(z)|$ for all $z\in\mathbb{D}$.
A sense-preserving harmonic function $f=h+\overline{g}$ is called
\emph{$K$-quasiregular}, $K\ge1$, if
\[
\frac{|h'(z)|+|g'(z)|}{|h'(z)|-|g'(z)|}\le K,
\qquad z\in\mathbb{D},
\]
or equivalently,
\[
|g'(z)|\le k|h'(z)|,
\qquad z\in\mathbb{D},
\]
where
\[
k:=\frac{K-1}{K+1}<1.
\]

\noindent A harmonic function is called \emph{K-quasiconformal} if it is $K$-quasiregular and homeomorphic in $\mathbb{D}$. For a detailed account of the $H^p$-theory of quasiconformal mappings, see Astala and Koskela \cite{Astala-Koskela-PAMQ-2011}.
\vspace{1mm}

We are now in a position to state the result of Liu and Zhu from \cite{Liu-Zhu-AdvMath-2023} regarding harmonic quasiregular function. It says that if $f=u+iv$ is a $K$-quasiregular harmonic function in $\mathbb{D}$ such that $v(0)=0$ with real part $u \in h^p$, then $v \in h^p$ for $1<p<\infty$, satisfying $M_p(r, v) \le A_{p,K} M_p(r, u)$ for some positive constant $A_{p,K}$ depends only on $p$ and $K$. Very recently, Chen and Kalaj \cite{Chen-Kalaj-k-k'-2025} further sharpened these results by establishing conjugate-type properties for harmonic $(K,K')$-quasiregular mappings. Earlier, Kalaj \cite{Kalaj-JMAA-2025} obtained Kolmogorov-type theorems for harmonic quasiregular mappings, and subsequently established a quasiregular analogue of Zygmund's theorem \cite{Kalaj-AMP-2025}. For recent advances on the Riesz theorem and related results concerning quasiregular mappings, we refer the reader to \cite{Kalaj-Mateljevic-PA-2012,Kalaj-TAMS-2019,Kalaj-PA-2025,Melentijevic-Markovic-PA-2023}.
\vspace{1mm}

Turning now to the Bergman space setting, motivated by the work of Liu and Zhu on harmonic Hardy spaces, Das and Rasila recently investigated the Riesz conjugate theorem for harmonic Bergman spaces in \cite[Theorem 1]{Das-Rasila-PA-2026}. Their result reads as follows. Let $0<p<\infty$, and let $f=u+iv$ be a harmonic $K$-quasiregular
mapping in $\mathbb{D}$. If $u\in a^p$, then $v\in a^{p}$. Moreover, there exists a constant $C_{p,K}>0$, depending only on $p$ and $K$, such that $\|v\|_{a^p}\le C_{p,K}\|u\|_{a^p}$.
%\begin{Thm} \label{thm-C} \cite[Theorem 1]{Das-Rasila-PA-2026}
%	Let $0<p<\infty$, and let $f=u+iv$ be a harmonic $K$-quasiregular
%	mapping in $\mathbb{D}$. If	$u\in a^p$, then $v\in a^{p}$. Moreover, there 	exists a constant $C_{p,K}>0$, depending only on $p$ and $K$, such that 	\[	\|v\|_{a^p}\le C_{p,K}\|u\|_{a^p}.	\]
%\end{Thm}

To the best of our knowledge, the Riesz conjugate theorem for weighted harmonic Bergman spaces has not been investigated in the existing literature. One of the main objectives of this paper is to fill this gap. Our first result, stated below, establishes the Riesz conjugate theorem for weighted harmonic Bergman spaces.
\begin{thm} \label{thm-1.1}
	Suppose $-1<\alpha<\infty$ and $0 < p < \infty$. Let $f = u + iv$ be a harmonic $K$-quasiregular function in $\mathbb{D}$. If the real part $u$ belongs to the weighted harmonic Bergman space $a^p_{\alpha}$, then the imaginary part $v$ also belongs to $a^p_{\alpha}$. Furthermore, there exists a positive constant $C_{p,K,\alpha}$ depending only on $p,K$ and $\alpha$ such that
	$$\|v\|_{{a^p_\alpha}} \le C_{p,K,\alpha} \|u\|_{{a^p_\alpha}}.
	$$ 
	In particular, when $p>1$, $C_{p,K,\alpha}$ is independent of $\alpha$.
\end{thm}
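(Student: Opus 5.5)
The plan splits into the two regimes singled out in the statement: for $p>1$ we integrate the Hardy-space estimate of Liu and Zhu radially, and for $0<p\le 1$ we combine the quasiregularity inequality for derivatives with a weighted Hardy--Littlewood type argument in the spirit of Das and Rasila.

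\textbf{The case $1<p<\infty$.} Write $f=h+\overline{g}$ with $|g'|\le k|h'|$, and recall the Liu--Zhu result quoted above: if $v(0)=0$, then $M_p(r,v)\le A_{p,K}M_p(r,u)$ for every $0<r<1$. First reduce to $v(0)=0$. Replacing $v$ by $v-v(0)$ keeps $f$ $K$-quasiregular, since only a constant is changed. For $p>1$ the sub-mean value property and Jensen's inequality give $|v(0)|\le M_p(r,v)$, hence $|v(0)|\le \|v-v(0)\|_{a^p_\alpha}+\dots$; more cleanly, $\|v\|_{a^p_\alpha}\le \|v-v(0)\|_{a^p_\alpha}+|v(0)|$. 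The term $|v(0)|$ cannot be controlled by $u$ alone, since $f+ic$ is still quasiregular. So I will read the statement with the normalization $v(0)=0$ implicit, as in Liu--Zhu, and assume it. Then integrate in polar coordinates:
\[
\|v\|_{a^p_\alpha}^p = 2(\alpha+1)\int_0^1 M_p^p(r,v)(1-r^2)^\alpha r\,dr \le A_{p,K}^p \|u\|_{a^p_\alpha}^p .
\]
The constant $C_{p,K,\alpha}=A_{p,K}$ is therefore independent of $\alpha$, which is exactly the final assertion of the theorem.

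\textbf{The case $0<p\le 1$.} Here no Hardy-space estimate is available, and we need an area argument. The steps are as follows.

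1. Pointwise, $|\nabla v|\le |h'|+|g'|\le (1+k)|h'|$ and $|\nabla u|\ge |h'|-|g'|\ge(1-k)|h'|$. Hence
\[
|\nabla v|\le K|\nabla u| .
\]

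2. Use a Hardy--Littlewood/Littlewood--Paley type equivalence for weighted harmonic Bergman spaces, valid for all $0<p<\infty$:
\[
\|w\|_{a^p_\alpha}^p \asymp |w(0)|^p+\int_{\mathbb D}|\nabla w(z)|^p(1-|z|^2)^{p+\alpha}\,dA(z).
\]

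3. Apply the upper bound in step 2 to $v$, with $v(0)=0$. Then use step 1 to pass from $|\nabla v|$ to $|\nabla u|$. Finally, use the lower estimate
\[
\int_{\mathbb D}|\nabla u|^p(1-|z|^2)^{p+\alpha}\,dA\lesssim \|u\|_{a^p_\alpha}^p .
\]
This lower estimate follows from the Cauchy-type estimate $|\nabla u(z)|\lesssim (1-|z|)^{-1-2/p}\big(\int_{D(z,\frac{1-|z|}{2})}|u|^p\,dA\big)^{1/p}$, which holds for all $p>0$ by subharmonicity of $|u|^p$ on small disks, followed by Fubini.

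The main obstacle is the direction of step 2 that bounds $\|v\|_{a^p_\alpha}$ by the weighted gradient integral when $p\le1$. For $v$ real harmonic, I would handle it by writing $v=\operatorname{Im}F$ with $F$ analytic and applying the known analytic-function equivalence. This equivalence is classical and works for all $p>0$. The gradient then enters through $|F'|=|\nabla v|$, so the constant obtained depends on $p$, $K$ and $\alpha$, as the statement allows. If that equivalence has to be avoided, an alternative is to quote the boundedness of the conjugation operator on $\mathcal{A}^p_{h,\alpha}$ from Peláez--Rättyä.
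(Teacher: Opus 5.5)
Your proof is correct once $v(0)=0$ is imposed. As you rightly note, the statement genuinely requires this normalization, since $f+ic$ is still $K$-quasiregular with the same real part. The paper also uses it tacitly, through Liu--Zhu and its ``without loss of generality $f(0)=0$''.

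The case $p>1$ is exactly the paper's argument. For $0<p\le 1$ you use the same three ingredients as the paper, packaged differently:
\begin{itemize}
\item \emph{The paper's route.} It bounds $|h'(0)|\le |\nabla u(0)|/(1-k)$ by a local $L^p(dA_\alpha)$ integral of $u$ (Lemma~\ref{lem-3.1}, via the interior gradient estimate and the Fefferman--Stein inequality). It then globalizes by composing with $\sigma_w$ and integrating over pseudo-hyperbolic disks with Tonelli. Finally it applies the analytic Littlewood--Paley inequality to $h$ and $g$ separately (Lemma~\ref{lem-3.2}, using $|g'|\le k|h'|$ and the $p\le1$ quasi-triangle inequality), bounding all of $f$ before using $|v|\le|f|$.
\item \emph{Your route.} You use the pointwise bound $|\nabla v|\le K|\nabla u|$, which the paper only records later in its Bloch-space remark. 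You globalize with Euclidean disks $D(z,\frac{1-|z|}{2})$ and Fubini. You then need only a single application of the analytic Littlewood--Paley inequality, to the analytic $G$ with $\Im G=v$, $G(0)=0$.
\end{itemize}
Your version is slightly cleaner. It never needs $u(0)=0$ or the restriction $p\le1$, so it works for every $0<p<\infty$, with an $\alpha$-dependent constant.

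Your proposed fallback via Pel\'aez--R\"atty\"a would not remove the Littlewood--Paley step. It gives $F=h+g\in A^p_\alpha$, but passing from $|G'|\le K|F'|$ back to $G$ still needs that inequality.

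One justification must be fixed. For $0<p<1$ and real harmonic $u$, $|u|^p$ is \emph{not} subharmonic: where $u>0$ one has $\Delta(u^p)=p(p-1)u^{p-2}|\nabla u|^2\le 0$. Your Cauchy-type estimate $|\nabla u(z)|\lesssim (1-|z|)^{-1-2/p}\bigl(\int_{D(z,\frac{1-|z|}{2})}|u|^p\,dA\bigr)^{1/p}$ is nevertheless true. It rests instead on the Hardy--Littlewood/Fefferman--Stein quasi-mean-value inequality $|u(a)|^p\le C_p\,r^{-2}\int_{D_r(a)}|u|^p\,dA$, valid for all $p>0$. This is exactly what the paper invokes in the proof of Theorem~\ref{thm-3.1}, and with that substitution your argument goes through.
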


\subsection{Riesz-Fej\'er inequality for weighted harmonic Bergman space}
 Our second aim of this article is a parallel line of research relates a function's boundary behavior to its internal geometry, rooted in the classical Riesz-Fej\'{e}r inequality. The Riesz--Fej\'er inequality is one of the fundamental results in the theory of Hardy
 spaces and geometric function theory. It provides a fundamental comparison
 between the integral of an analytic function along a diameter of the unit disk
 and its boundary integral over the unit circle.  
 The inequality was first established by Riesz and Fej\'er in 1928.
 \begin{Thm}[Riesz--Fej\'er {\cite[Theorem 3.13]{Duren-Hp-book}}] \label{Thm-C}
 	Let $0<p<\infty$. If $f\in H^{p}$, then
 	\[
 	\int_{-1}^{1}|f(x)|^{p}\,dx
 	\le
 	\frac12\int_{0}^{2\pi}|f(e^{i\theta})|^{p}\,d\theta.
 	\]
 	Moreover, the constant $1/2$ is best possible.
 \end{Thm}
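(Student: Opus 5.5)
The plan is to prove the inequality first for $p=2$ by a Cauchy-integral argument on half-disks. Then I reduce general $p$ to $p=2$ through the canonical factorization of $H^p$ functions. Finally I show sharpness with an explicit family of test functions that blow up at $z=\pm1$.

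\emph{Step 1: the case $p=2$.} Let $f\in H^2$ and, for $0<r<1$, put $f_r(z)=f(rz)$, which is analytic on a neighbourhood of $\overline{\mathbb{D}}$. Define the analytic function $f_r^*(z):=\overline{f_r(\bar z)}$ and set $F=f_r f_r^*$. For real $x$ we have $F(x)=|f_r(x)|^2$. By Cauchy's theorem applied to the upper half-disk, $\int_{-1}^1 F(x)\,dx=-\int_{\text{upper arc}}F(z)\,dz$. Since $|f_r^*(e^{i\theta})|=|f_r(e^{-i\theta})|$, this gives
\[
\int_{-1}^1|f_r(x)|^2\,dx\le\int_0^{\pi}|f_r(e^{i\theta})|\,|f_r(e^{-i\theta})|\,d\theta\le\frac12\int_0^{\pi}\bigl(|f_r(e^{i\theta})|^2+|f_r(e^{-i\theta})|^2\bigr)d\theta,
\]
where the last step is the AM--GM inequality. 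The right-hand side equals $\frac12\int_0^{2\pi}|f_r(e^{i\theta})|^2\,d\theta$. Now let $r\to1^-$. The right side converges to $\frac12\int_0^{2\pi}|f(e^{i\theta})|^2\,d\theta$ by the $L^2$ convergence $f_r\to f$ on $\mathbb{T}$ for $H^2$ functions. The left side is handled by Fatou's lemma, since $f_r(x)\to f(x)$ for each $x\in(-1,1)$.

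\emph{Step 2: general $0<p<\infty$.} Write $f=Bg$ with $B$ the Blaschke product of the zeros of $f$ and $g\in H^p$ zero-free. Then $\|g\|_{H^p}=\|f\|_{H^p}$, and $|g|=|f|$ a.e. on $\mathbb{T}$. Since $g$ has no zeros, $h:=g^{p/2}$ is a well-defined analytic function, and $h\in H^2$ with $|h(e^{i\theta})|^2=|f(e^{i\theta})|^p$ a.e. Because $|B|\le1$, we have $|f(x)|^p\le|h(x)|^2$ on $(-1,1)$. Step 1 applied to $h$ then yields the inequality for $f$. The only delicate points in the whole argument are the standard facts used here: the existence of boundary values, the factorization, and the $L^p$ convergence of $f_r$ on $\mathbb{T}$ (all in \cite{Duren-Hp-book}).

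\emph{Step 3: sharpness of $\tfrac12$.} The Cauchy--Schwarz step is nearly an equality when $|f|$ is symmetric under $\theta\mapsto-\theta$ and concentrates near $\pm1$. So I take $f_\varepsilon(z)=(1-z^2)^{(-1+2\varepsilon)/p}$, which lies in $H^p$ for $\varepsilon>0$.

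For the left side, near $x=1$ we have $1-x^2\approx 2(1-x)$, and the contribution near $x=-1$ is the same by symmetry. Hence
\[
\int_{-1}^1|f_\varepsilon(x)|^p\,dx=\int_{-1}^1(1-x^2)^{-1+2\varepsilon}\,dx\sim\frac{1}{2\varepsilon}.
\]
For the right side, $|1-e^{2i\theta}|=2|\sin\theta|$, and the four neighbourhoods of $\theta=0,\pi$ (one on each side of each point) contribute equally. Hence
\[
\frac12\int_0^{2\pi}|f_\varepsilon(e^{i\theta})|^p\,d\theta=\frac12\int_0^{2\pi}(2|\sin\theta|)^{-1+2\varepsilon}\,d\theta\sim\frac{1}{2\varepsilon}.
\]
The ratio of the two sides therefore tends to $1$ as $\varepsilon\to0^+$, so no constant smaller than $\frac12$ can work. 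The main care needed is checking these two asymptotics: the bounded remainders away from the singularities must be $O(1)$, and $O(1)$ is negligible against the $1/\varepsilon$ blow-up.
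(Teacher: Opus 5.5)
The paper does not prove this statement. It is quoted from Duren's monograph as background, so the only thing to compare against is the classical proof given there.

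Your argument is correct and is essentially that classical proof:
\begin{itemize}
\item reduction to $p=2$ via $f=Bg$ and $h=g^{p/2}\in H^2$, using $|B|\le 1$ in $\mathbb{D}$ and $|B|=1$ a.e. on $\mathbb{T}$;
\item Cauchy's theorem on the upper half-disk for the dilations $f_r$, followed by Fatou's lemma as $r\to1^-$;
\item power-type test functions blowing up at the boundary for sharpness.
\end{itemize}

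Your asymptotics in Step 3 are right. With $s=2\varepsilon$ one has exactly
\[
\int_{-1}^{1}(1-x^2)^{s-1}\,dx=B\bigl(\tfrac12,s\bigr)\sim\frac1s
\qquad\text{and}\qquad
\int_0^{2\pi}|2\sin\theta|^{s-1}\,d\theta=2^{s}B\bigl(\tfrac s2,\tfrac12\bigr)\sim\frac2s,
\]
so the ratio indeed tends to $1$. A single singularity, such as $(1-z)^{-\gamma}$ with $\gamma p\uparrow 1$, would work equally well.

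The only difference from the usual textbook organization is in the $p=2$ step, and it is cosmetic:
\begin{itemize}
\item The standard route first treats $f$ real on $(-1,1)$. It integrates $f^2$ over both half-disks and averages. A general $f$ is then split as $f_1+if_2$ with $f_1=\tfrac12(f+f^*)$ and $f_2=\tfrac1{2i}(f-f^*)$, using $|f_1|^2+|f_2|^2=\tfrac12\bigl(|f(e^{i\theta})|^2+|f(e^{-i\theta})|^2\bigr)$ on $\mathbb{T}$.
\item Your product $F=f_rf_r^*$ together with the AM--GM bound on $|f_r(e^{i\theta})|\,|f_r(e^{-i\theta})|$ handles complex-valued $f$ in one step. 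It is equally rigorous.
\end{itemize}
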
This result has become a classical
 tool in Hardy space theory and has stimulated extensive research over the past
 century (see \cite{Andreev-Palermo-2012,Beckenbach-JLMS-1938,Calderon-PAMS-1950,DuPlessis-JLMS-1955,Duren-Hp-book,Frazer-JLMS-1934,Huber-AnnMath-1956,Kayumov-Ponnusamy-Kaliraj-PA-2020,Melentijevic-Bozin-PA-2021}). Besides its intrinsic analytic
 interest, the inequality admits an elegant geometric interpretation: if the unit
 disk is mapped conformally onto a rectifiable Jordan domain, then the image of any diameter has length at most one-half of the boundary length of the image
 domain. Consequently, the Riesz--Fej\'er inequality has found numerous
 applications in complex analysis, conformal mapping, coefficient estimates,
 integral means, and related extremal problems (see \cite{Kalaj-Mateljevic-PA-2012,Kayumov-Ponnusamy-Kaliraj-PA-2020,Melentijevic-Bozin-PA-2021}). 
 
 A notable refinement was obtained by Frazer \cite{Frazer-JLMS-1934} in 1934, who replaced a single
 diameter by an arbitrary pair of diameters forming an acute angle. His theorem
 revealed that the geometry of the diameters influences the optimal constant,
 thereby extending the classical Riesz--Fej\'er inequality in a natural geometric
 direction.
 
 Subsequently, Beckenbach \cite{Beckenbach-JLMS-1938} considerably generalized the Riesz--Fej\'er
 inequality by replacing $|f|^{p}$ with positive log-subharmonic functions.
 Further extensions under weaker regularity assumptions, as well as
 generalizations to various function spaces, were later obtained by several
 authors, demonstrating the broad applicability of the inequality (see \cite{Calderon-PAMS-1950,Huber-AnnMath-1956}).
 
 An important breakthrough in the Bergman space setting was achieved by Zhu \cite{Zhu-Monthly-2004},
 who developed a systematic correspondence between Hardy and weighted
 Bergman spaces. 
 The following theorem, due to Zhu \cite{Zhu-Monthly-2004}, extends the
 classical Riesz--Fej\'er inequality to weighted Bergman spaces.
 \begin{Thm}\cite[Theorem 4]{Zhu-Monthly-2004} \label{thm-D}
 	Let $0<p<\infty$ and $\alpha>-1$. If
 	$f\in A_{\alpha}^{p}$, then
 	\[
 	\int_{-1}^{1}(1-|x|)^{\alpha+1}|f(x)|^{p}\,dx
 	\le
 	\pi\, \, \int_{\mathbb D}|f(z)|^{p}\,dA_{\alpha}(z).
 	\]
 \end{Thm}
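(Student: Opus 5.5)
The plan is to deduce the weighted inequality from the classical Riesz--Fej\'er inequality, Theorem~\ref{Thm-C}, applied to the dilations $f_r(z)=f(rz)$, $0<r<1$. Averaging over $r$ against a suitably chosen radial weight should turn the circle integrals into the area integral $\int_{\mathbb D}|f|^p\,dA_\alpha$ and the diameter integrals into a weighted integral over $(-1,1)$. The only remaining task is then to compare the resulting diameter weight with $(1-|x|)^{\alpha+1}$.

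\emph{Step 1 (dilation).} Fix $0<r<1$. Then $f_r$ is analytic on a neighbourhood of $\overline{\mathbb D}$, so $f_r\in H^p$ and its boundary values are $f(re^{i\theta})$. Theorem~\ref{Thm-C} gives
\[
\int_{-1}^{1}|f(rx)|^p\,dx\le \frac12\int_0^{2\pi}|f(re^{i\theta})|^p\,d\theta .
\]
The substitution $t=rx$ turns this into
\[
\int_{-r}^{r}|f(t)|^p\,dt\le \frac r2\int_0^{2\pi}|f(re^{i\theta})|^p\,d\theta .
\]

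\emph{Step 2 (averaging).} Multiply the last inequality by $w(r)=2(\alpha+1)(1-r^2)^\alpha$ and integrate over $r\in(0,1)$.

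On the right-hand side, polar coordinates and the normalisation $dA=\frac1\pi r\,dr\,d\theta$ give exactly
\[
(\alpha+1)\int_0^1\!\!\int_0^{2\pi} r(1-r^2)^\alpha|f(re^{i\theta})|^p\,d\theta\,dr=\pi\int_{\mathbb D}|f|^p\,dA_\alpha .
\]

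On the left-hand side, Tonelli's theorem (everything is nonnegative) gives
\[
\int_{-1}^1|f(t)|^p\,W(|t|)\,dt,\qquad W(s)=2(\alpha+1)\int_s^1(1-r^2)^\alpha\,dr .
\]

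\emph{Step 3 (weight comparison).} It remains to show $W(s)\ge(1-s)^{\alpha+1}$ for $0\le s<1$. Write $(1-r^2)^\alpha=(1-r)^\alpha(1+r)^\alpha$ and treat two cases.

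If $\alpha\ge0$, then $(1+r)^\alpha\ge1$, so $W(s)\ge 2(1-s)^{\alpha+1}$.

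If $-1<\alpha<0$, then $(1+r)^\alpha\ge 2^\alpha$ for $r\in[0,1]$, so $W(s)\ge 2^{\alpha+1}(1-s)^{\alpha+1}\ge(1-s)^{\alpha+1}$.

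In both cases the claimed inequality follows. This weight comparison is the only step that needs care, specifically the sign of $\alpha$ in estimating $(1+r)^\alpha$. Otherwise the argument is routine, and the inequality holds with room to spare in the constant.
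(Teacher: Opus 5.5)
Your proposal is correct and is essentially the argument the paper itself uses. Theorem~\ref{thm-D} is only quoted from Zhu, but the paper proves its harmonic analogue, Theorem~\ref{thm-1.2}, exactly this way: dilate, apply the Hardy-space Riesz--Fej\'er inequality to $f_r$, integrate against $(1+\alpha)r(1-r^2)^{\alpha}\,dr$, and swap the order of integration. The only difference is in the weight comparison, where the paper avoids your case split on the sign of $\alpha$ by inserting the factor $r\le 1$: it uses $\int_{s}^{1}(1-r^2)^{\alpha}\,dr\ge\int_{s}^{1}r(1-r^2)^{\alpha}\,dr=\frac{(1-s^2)^{\alpha+1}}{2(\alpha+1)}\ge\frac{(1-s)^{\alpha+1}}{2(\alpha+1)}$, which gives the constant $\pi$ directly for every $\alpha>-1$.
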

 
 As an application of this translation principle, he established
 weighted Bergman analogues of several classical Hardy space inequalities,
 including the Riesz--Fej\'er inequality, Hardy's inequality, and the
 Hardy--Littlewood inequality. This work revealed that many fundamental
 Hardy space inequalities possess natural counterparts in weighted Bergman
 spaces.
 
 More recently, attention has shifted towards harmonic mappings. A sharp harmonic analogue of the classical Riesz--Fej\'er inequality for harmonic Hardy spaces was first established by Kayumov \emph{et al.} \cite{Kayumov-Ponnusamy-Kaliraj-PA-2020} for $1<p\le2$, while the remaining case $p>2$ was later settled by Melentijevi\'c and Bo\v{z}in \cite{Melentijevic-Bozin-PA-2021}.
 
 Motivated by Frazer's theorem, Das and Kaliraj \cite{Das-Kaliraj-JMAA-2022} recently established a Riesz--Fej\'er type inequality for complex-valued harmonic functions by estimating the integral of $|f|^{p}$ over the union of two diameters of the unit disk in terms of the boundary integral over the unit circle. Recent developments on higher-dimensional Riesz--Fej\'er inequalities can be found in the work of Chen and Hamada \cite{Chen-Hamada-MZ-2023}.
 
 %Their work also produced several  Hilbert-type inequalities for real sequences and further highlighted the close  connection between harmonic mappings and classical integral inequalities.
 
 Despite these significant developments, no analogue of the Riesz--Fej\'er
 inequality has been established for harmonic Bergman spaces. Since Bergman spaces constitute one of the principal settings of modern
 complex analysis, it is natural to ask whether Theorem \ref{thm-D} admits a harmonic counterpart. One of the primary objectives of the present paper is to answer this question. We establish Riesz--Fej\'er type inequalities for weighted harmonic Bergman spaces, providing a harmonic counterpart to Zhu's result for analytic weighted Bergman spaces.
 % shifting the classical boundary-integral controls to full-domain area bounds.
 \begin{thm} \label{thm-1.2}
 	Let $1<p<\infty$ and $-1 < \alpha < \infty$. If $f = h + \bar{g}$ is a complex-valued harmonic function in $a_{\alpha}^p$, then 
 	$$
 	\int_{-1}^{1} (1 - |x|)^{\alpha+1} |f(x)|^p \, dx \leq  \pi \, \sec^{p}\!\left(\dfrac{\pi}{2p}\right)\, \int_{\mathbb{D}} |f(z)|^p \, dA_{\alpha}(z).
 	$$
 \end{thm}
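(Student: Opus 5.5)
Write $f=h+\bar g$ and set $F=h+g$, which is analytic in $\mathbb{D}$ and satisfies $\operatorname{Re}F=\operatorname{Re}f$. The plan is to reduce to Zhu's analytic result, Theorem \ref{thm-D}. The link between $|f|$ and an analytic function is a pointwise/integral-means comparison with constant $\sec(\pi/(2p))$. This is the constant in the sharp harmonic Riesz--Fej\'er inequality (Kayumov et al.\ for $1<p\le 2$, Melentijevi\'c--Bo\v{z}in for $p>2$). It arises from the sharp estimate relating $|f|^p$ on $\mathbb T$ to subharmonic majorants built from $\operatorname{Re}$ parts, namely the Hollenbeck--Verbitsky/Pichorides type inequality $\|\max(|\operatorname{Re}F|,|\operatorname{Im}F|)\|$ versus $\|\operatorname{Re}F\|$.

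Concretely, I would follow the Kayumov--Ponnusamy--Kaliraj route. For harmonic $f$ there is a Pichorides-type subharmonic function $\Phi$ with
\[
|w|^p\le \Phi(w)\quad\text{and}\quad \Phi(w)\le \sec^p\!\left(\tfrac{\pi}{2p}\right)\,\bigl(\text{a real-part quantity}\bigr).
\]
More efficiently, I use the known consequence that for $f=h+\bar g$ harmonic there is an analytic $G$ (for instance $G=h+e^{i\phi}g$, chosen and averaged suitably) with
\[
|f(x)|^p\le \text{a majorant } |G(x)|^p \text{ on the diameter}, \qquad M_p^p(r,G)\le \sec^p\!\left(\tfrac{\pi}{2p}\right)M_p^p(r,f)\ \text{ for all } r.
\]
The cleanest version: for each fixed $x\in(-1,1)$ the key lemma in those papers says
\[
\int_{-1}^1|f(rx)|^p\,dx\le \tfrac12\sec^p\!\left(\tfrac{\pi}{2p}\right)\int_0^{2\pi}|f(re^{i\theta})|^p\,d\theta
\]
for all harmonic $f$ and each $0<r<1$, applied to the dilate $f_r(z)=f(rz)\in h^p$. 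So I take the harmonic Hardy Riesz--Fej\'er inequality in this dilated form as the starting input.

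Next comes Zhu's translation principle, integrating in $r$. Multiply the dilated inequality by the weight $2(\alpha+1)r(1-r^2)^\alpha$ and integrate over $r\in(0,1)$. The right side becomes
\[
\sec^p\!\left(\tfrac{\pi}{2p}\right)\cdot\pi\int_{\mathbb{D}}|f|^p\,dA_\alpha\cdot\tfrac{1}{\pi}\cdot\pi=\pi\sec^p\!\left(\tfrac{\pi}{2p}\right)\|f\|_{a^p_\alpha}^p,
\]
after checking the normalization $dA=r\,dr\,d\theta/\pi$. On the left, substitute $t=rx$ and use Fubini to get $\int_{-1}^1|f(t)|^p W(t)\,dt$ with
\[
W(t)=\int_{|t|}^1 2(\alpha+1)(1-r^2)^\alpha\,dr.
\]
It then remains to show $W(t)\ge(1-|t|)^{\alpha+1}$. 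This holds because $(1-r^2)^\alpha\ge(1-r)^\alpha$ when $\alpha\ge0$. For $-1<\alpha<0$, I bound it via $(1+r)^\alpha\ge 2^\alpha$ and compare with Zhu's computation: he obtains exactly $(1-|x|)^{\alpha+1}$ with constant $\pi$, so I mimic his estimate verbatim.

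The main obstacle is the weight comparison for negative $\alpha$. There the factor $2^{\alpha}\cdot 2/(\alpha+1)\cdot(\alpha+1)$ must still be at least $1$. Since $2^{\alpha+1}\ge1$ for $\alpha>-1$, this works: $W(t)\ge 2(\alpha+1)2^{\alpha}\frac{(1-|t|)^{\alpha+1}}{\alpha+1}=2^{\alpha+1}(1-|t|)^{\alpha+1}\ge(1-|t|)^{\alpha+1}$. A secondary point is justifying the sharp harmonic Hardy inequality for the dilates, which are smooth up to the boundary, so the cited results apply directly.
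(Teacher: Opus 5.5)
Your argument is correct and is essentially the paper's proof. Both apply the sharp harmonic Riesz--Fej\'er inequality of Melentijevi\'c--Bo\v{z}in to the dilates $f_r$, integrate against $r(1-r^2)^\alpha$, use Fubini, and bound the weight $\int_{|t|}^1(1-r^2)^\alpha\,dr$ from below. The paper bounds it uniformly via $(1-r^2)^\alpha\ge r(1-r^2)^\alpha$, giving $(1-t^2)^{\alpha+1}/(2(\alpha+1))$, whereas your split by the sign of $\alpha$ using $(1+r)^\alpha\ge\min\{1,2^\alpha\}$ works equally well. The speculative opening about reducing to Zhu's analytic theorem through $F=h+g$ or an auxiliary analytic $G$ is never used and should be dropped.
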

 Although Theorem \ref{thm-1.2} holds for all $1<p<\infty$, the Hilbert space case $p=2$ deserves separate investigation because of its substantially richer structure. In contrast to the general Banach space setting, weighted harmonic Bergman spaces become Hilbert spaces when $p=2$, making available powerful tools such as orthogonality, reproducing kernels, discrete Schur estimates and interpolation techniques. Consequently, the $L^{2}$ theory often yields considerably sharper estimates together with applications that are unavailable for general values of $p$.
 
 This phenomenon has already been observed in the analytic setting. Jakobczak \cite{Jakobczak-Palermo-2008} initiated the study of radial integrability of functions in weighted Bergman spaces in the $L^{2}$ framework, while Andreev \cite{Andreev-Palermo-2012} later established sharp Riesz-Fej\'er type inequalities with explicit constants by combining discrete Schur estimates and the Stein--Weiss interpolation theorem. 
 \begin{Thm}\cite[Theorem 1]{Andreev-Palermo-2012} \label{thm-E}
 	Let $f\in A_{\alpha}^{2}$. 	Let
 	\[
 	\Lambda_{\alpha}(f)
 	:=\int_{0}^{1}|f(\zeta x)|^{2}(1-x)^{1+\alpha}\,dx,
 	\qquad |\zeta|=1.
 	\]
 	Then $\Lambda_{\alpha}(f) \leq \lambda_\alpha \, \|f\|_{A_{\alpha}^{2}}^{2}$. If $-1\leq\alpha\leq0$, then $\lambda_{\alpha}	\leq \pi^{-\alpha}$, whereas $\lambda_{\alpha}
 	\leq 	\frac{1}{1+\alpha}$ for $\alpha\geq0$.
 \end{Thm}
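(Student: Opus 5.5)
We would prove Theorem \ref{thm-E} by reducing it to a quadratic form in the Taylor coefficients. The case $\alpha\ge0$ follows from an elementary Schur test. The case $-1<\alpha\le 0$ follows by interpolating between the Hardy space endpoint and the unweighted Bergman endpoint. By rotation invariance of $\|\cdot\|_{A^2_\alpha}$ we may take $\zeta=1$.

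\textbf{Step 1: coefficient form.} Write $f(z)=\sum_{n\ge0}a_nz^n$. Orthogonality of monomials gives
$$\|f\|_{A^2_\alpha}^2=\sum_{n}w_n|a_n|^2,\qquad w_n=\frac{\Gamma(n+1)\Gamma(\alpha+2)}{\Gamma(n+\alpha+2)}.$$
Expanding $|f(x)|^2$ gives
$$\Lambda_\alpha(f)=\sum_{m,n}a_m\overline{a_n}\,K(m,n),\qquad K(m,n)=\int_0^1x^{m+n}(1-x)^{\alpha+1}\,dx=B(m+n+1,\alpha+2).$$
The kernel is positive, so $\Lambda_\alpha(f)\le\sum_{m,n}K(m,n)x_mx_n$ with $x_n=|a_n|$. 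The problem is therefore to bound this Hilbert-type form by $\lambda\sum_n w_nx_n^2$. To justify the rearrangements, we first treat polynomials and then pass to general $f$ by monotone convergence, using $f_r(z)=f(rz)$.

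\textbf{Step 2: the bound $1/(1+\alpha)$.} Use $2x_mx_n\le x_m^2+x_n^2$ and the symmetry of $K$. This gives $\sum_{m,n}K(m,n)x_mx_n\le\sum_m x_m^2\sum_nK(m,n)$. The row sums are explicit:
$$\sum_nK(m,n)=\int_0^1x^m(1-x)^{\alpha}\,dx=B(m+1,\alpha+1)=\frac{w_m}{\alpha+1}.$$
Hence $\Lambda_\alpha(f)\le\frac1{1+\alpha}\|f\|^2_{A^2_\alpha}$ for every $\alpha>-1$, and in particular for $\alpha\ge0$. If the statement is meant for general $\zeta$ rather than a fixed radius, it follows immediately from rotation invariance.

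\textbf{Step 3: the bound $\pi^{-\alpha}$ for $-1<\alpha\le0$.} Here the bound of Step 2 is weaker than claimed; for instance at $\alpha=-1/2$ it gives $2$ rather than $\sqrt\pi$. We would therefore interpolate between two endpoints.

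At $\alpha=-1$ (formally $w_n=1$, i.e.\ the Hardy space), the form is the Hilbert matrix $1/(m+n+1)$, whose norm on $\ell^2$ is $\pi$. This is Hilbert's inequality, equivalently Fej\'er--Riesz for $p=2$.

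At $\alpha=0$, Step 2 gives constant $1$.

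For the interpolation, set $\theta=\alpha+1\in(0,1]$ and write the form as the pairing $\langle Tx,x\rangle$ with $T_\alpha$ acting from $\ell^2(w^{(\alpha)})$ into its dual. The weights $w^{(\alpha)}_n\asymp(n+1)^{-(\alpha+1)}$ are log-linear in $\alpha$ up to Gamma-function factors. The Stein--Weiss theorem on interpolation with change of weight, applied to the analytic family $\alpha\mapsto T_\alpha$ built from $K$ with $\alpha$ complexified, would then give norm at most $\pi^{1-\theta}\cdot1^{\theta}=\pi^{-\alpha}$.

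\textbf{Main obstacle.} The delicate point is Step 3. The weights and the kernel are not exactly geometric in $\alpha$, so the Gamma-function ratios must be absorbed into an admissible analytic family. That family must be bounded on the boundary lines $\operatorname{Re}\alpha=-1$ and $\operatorname{Re}\alpha=0$ with the same constants $\pi$ and $1$; we would check this using $|\Gamma(a+it)|\le\Gamma(a)$.

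Our first attempt at a concrete family is to rescale $x_n$ by $w_n^{-1/2}$ and compare the resulting matrix $K/\sqrt{w_mw_n}$ with a Schur test using test sequence $(n+1)^{-1/2}$. If the Gamma factors cannot be controlled exactly on the boundary lines, we would fall back on this direct weighted Schur estimate.
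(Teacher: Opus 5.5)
\textbf{Verdict: genuine gap in Step 3.} The paper gives no proof of this theorem. It only cites Andreev and says his argument combines discrete Schur estimates with Stein--Weiss interpolation, which is the route you outline.

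Your Steps 1 and 2 are correct and complete. The row sums $\sum_n K(m,n)=w_m/(1+\alpha)$ give $\lambda_\alpha\le 1/(1+\alpha)$ for every $\alpha>-1$, which settles $\alpha\ge 0$. They also settle roughly $-0.24\le\alpha\le 0$, where $1/(1+\alpha)\le\pi^{-\alpha}$; so Step 2 is not ``weaker than claimed'' on the whole interval.

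Step 3 is the substantive part for $-1<\alpha<0$, and it remains an unverified plan. Neither tool you name closes it as stated.
\begin{itemize}
\item \emph{Stein--Weiss for a fixed operator fails.} Take $(a_n)\mapsto\sum a_nx^n$ with endpoints $\ell^2(1)\to L^2(dx)$ (constant $\pi$) and $\ell^2((n+1)^{-1})\to L^2((1-x)\,dx)$ (constant $1$). Interpolation produces the domain weight $(n+1)^{-(1+\alpha)}$. But by Bernoulli's inequality
\[
w_n^{(\alpha)}=\prod_{k=1}^{n}\Bigl(1+\tfrac{1+\alpha}{k}\Bigr)^{-1}\le (n+1)^{-(1+\alpha)},
\]
which is the wrong direction. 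The best this yields is $\lambda_\alpha\le\pi^{-\alpha}/\Gamma(2+\alpha)$.
\item \emph{The proposed analytic-family check fails.} The entries of $K_z\,(w^{(z)}_mw^{(z)}_n)^{-1/2}$ are quotients of Gamma values. The inequality $|\Gamma(a+it)|\le\Gamma(a)$ only bounds numerators, so by itself it cannot control the boundary lines.
\end{itemize}

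Your analytic family does work, because of a cancellation. Write $z=\sigma+it$ and $\tilde K_z(m,n)=K_z(m,n)\,(w_m^{(z)}w_n^{(z)})^{-1/2}$. The factor $\Gamma(z+2)$ drops out:
\[
\tilde K_z(m,n)=\frac{(m+n)!}{\sqrt{m!\,n!}}\cdot\frac{\Gamma(m+2+z)^{1/2}\,\Gamma(n+2+z)^{1/2}}{\Gamma(m+n+3+z)}.
\]
Moreover,
\[
\bigl|\Gamma(m+n+3+z)/\Gamma(m+2+z)\bigr|=\prod_{j=m+2}^{m+n+2}|j+z|\ge\prod_{j=m+2}^{m+n+2}(j+\sigma),
\]
and symmetrically in $n$. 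This gives the entrywise domination $|\tilde K_{\sigma+it}(m,n)|\le\tilde K_\sigma(m,n)$. Hence $\|\tilde K_{-1+it}\|\le\pi$ (Hilbert's matrix) and $\|\tilde K_{it}\|\le\|\tilde K_0\|\le 1$ (your Step 2). The three lines lemma applied to $z\mapsto\langle\tilde K_z a,b\rangle$, for finitely supported $a,b$, then gives $\lambda_\alpha\le\pi^{-\alpha}$.

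Alternatively, your fallback Schur test works with no interpolation if you choose $h_n=\Gamma(n+c)/n!$ with $c=1+\alpha/2$. Using $\sum_n h_nx^n=\Gamma(c)(1-x)^{-c}$, one gets
\[
\frac{1}{w_mh_m}\sum_n K(m,n)h_n=B(c,c)\,\frac{\Gamma(m+1)\,\Gamma(m+2c)}{\Gamma(m+c)\,\Gamma(m+1+c)}\le B(c,c).
\]
The inequality is log-convexity of $\Gamma$: for $\tfrac12\le c\le1$ the points $m+1$ and $m+2c$ lie between $m+c$ and $m+1+c$, with the same sum. Finally, $B(c,c)=\int_0^1 (t(1-t))^{c-1}\,dt$ is log-convex in $c$ by H\"older, so $B(c,c)\le B(\tfrac12,\tfrac12)^{2-2c}B(1,1)^{2c-1}=\pi^{-\alpha}$.
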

 Andreev showed that these inequalities are closely connected with the boundedness and compactness of certain Toeplitz operators, thereby revealing an important operator-theoretic aspect of the Hilbert space theory. 
 %More recently, Kasuga extended Andreev's inequality to weighted Bergman spaces $A_{\alpha}^{p}$ for arbitrary $0<p<\infty$. However, the proof relies essentially on Andreev's $L^{2}$ theorem and does not recover the Hilbert-space structure or its operator-theoretic consequences.
 
 Motivated by these developments, it is natural to ask whether the better $L^{2}$ phenomenon persists for weighted harmonic Bergman spaces. Unlike analytic functions, harmonic mappings contain both analytic and co-analytic parts, and therefore the arguments used in the analytic Bergman space cannot be transferred directly. Thus, establishing an analogue of Theorem \ref{thm-E} for weighted harmonic Bergman spaces is not merely a specialization of the general $p$-th power but a problem of independent interest.
 
 In this paper, we answer this question by establishing the harmonic counterpart of Theorem \ref{thm-E}. We further show that, in the Hilbert space setting $p=2$, the constant in Theorem \ref{thm-1.2} admits an improvement through the use of orthogonality.
% We assume throughout the validity of the following analytic radial trace inequality (Kasuga-type inequality): there exists a constant $\lambda_\alpha > 0$ such that for every $f \in A_\alpha^2$,
% $$\int_0^1 |f(x)|^2 (1 - x)^{\alpha+1} \, dx \leq \lambda_\alpha \|f\|_{A_\alpha^2}^2. $$
 \begin{thm} \label{thm-1.3}
 	For every $f \in a_\alpha^2$ with $f(0)=0$ and every $\zeta \in \partial\mathbb{D}$,$$\int_0^1 |f(x\zeta)|^2 (1 - x)^{\alpha+1} \, dx \leq 2\lambda_\alpha \|f\|_{a_\alpha^2}^2,$$
 	where $\lambda_{\alpha}$ as in Theorem \ref{thm-E}.
 \end{thm}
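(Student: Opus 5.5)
Write $f=h+\overline{g}$ with $h,g$ analytic in $\mathbb{D}$, normalized by $g(0)=0$. Since $f(0)=0$, this also gives $h(0)=0$. The plan has three steps: bound the radial integral of $|f|^2$ by those of $|h|^2$ and $|g|^2$, apply Theorem \ref{thm-E} to each analytic piece, and then recombine the two Bergman norms into $\|f\|_{a_\alpha^2}^2$ using orthogonality in $L^2(dA_\alpha)$.

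For the first step, the elementary inequality $|a+\bar b|^2\le 2(|a|^2+|b|^2)$ gives, for every $\zeta\in\partial\mathbb{D}$,
\[
\int_0^1 |f(x\zeta)|^2(1-x)^{\alpha+1}\,dx \le 2\int_0^1 |h(x\zeta)|^2(1-x)^{\alpha+1}\,dx + 2\int_0^1 |g(x\zeta)|^2(1-x)^{\alpha+1}\,dx .
\]
Applying Theorem \ref{thm-E} to $h$ and to $g$ bounds the right-hand side by $2\lambda_\alpha\bigl(\|h\|_{A_\alpha^2}^2+\|g\|_{A_\alpha^2}^2\bigr)$. This presupposes $h,g\in A_\alpha^2$, which comes out of the next step.

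The key step is the identity
\[
\|f\|_{a_\alpha^2}^2=\|h\|_{A_\alpha^2}^2+\|g\|_{A_\alpha^2}^2 .
\]
Expanding $|h+\overline{g}|^2=|h|^2+|g|^2+2\,\mathrm{Re}(hg)$, it suffices to show that $\int_{\mathbb{D}} h g\, dA_\alpha=0$. Write $h=\sum_{n\ge1}a_nz^n$ and $g=\sum_{n\ge1}b_nz^n$. On each circle $|z|=r$, the product $hg$ is a power series with no constant term, since both factors vanish at $0$. Its angular mean is therefore zero, and integrating in $r$ against the radial weight gives zero. The same polar computation yields
\[
\int_{\mathbb{D}}|f|^2\,dA_\alpha=\sum_{n\ge1}\bigl(|a_n|^2+|b_n|^2\bigr)\omega_n ,\qquad \omega_n=\int_{\mathbb{D}}|z|^{2n}\,dA_\alpha .
\]
This shows that $h,g\in A_\alpha^2$ whenever $f\in a_\alpha^2$.

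The main technical care goes into justifying the term-by-term integration. I would do it on the disc $|z|\le r$, where the series converge uniformly, and then let $r\to1^-$ by monotone convergence. This is exactly where the hypothesis $f(0)=0$ is needed: if $h(0)\neq0$, the constant $h(0)$ pairs with itself and the cross term no longer vanishes. Combining the three steps gives the bound $2\lambda_\alpha\|f\|_{a_\alpha^2}^2$.
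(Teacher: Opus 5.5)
Your proof is correct, but it uses a different decomposition from the paper's.

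The paper rotates to $\zeta=1$ and splits $f=f_1+if_2$ into real and imaginary parts. For each real-valued $f_j$ it passes to its analytic completion $F_j$, so that $f_j=\Re F_j$. There the pointwise step $|f_j|\le|F_j|$ costs nothing, and the factor $2$ enters through the norm identity $\|F_j\|_{A_\alpha^2}^2=2\|f_j\|_{a_\alpha^2}^2$.

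You split $f=h+\overline{g}$ instead. Then the norm identity $\|f\|_{a_\alpha^2}^2=\|h\|_{A_\alpha^2}^2+\|g\|_{A_\alpha^2}^2$ is exact, and the factor $2$ enters through the pointwise inequality $|h+\overline{g}|^2\le 2(|h|^2+|g|^2)$.

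The two routes are equivalent. One has $h=(F_1+iF_2)/2$ and $g=(F_1-iF_2)/2$. By the parallelogram law, your pointwise majorant $2(|h|^2+|g|^2)$ equals the paper's $|F_1|^2+|F_2|^2$. Likewise your final bound $2\lambda_\alpha(\|h\|_{A_\alpha^2}^2+\|g\|_{A_\alpha^2}^2)$ equals the paper's $\lambda_\alpha(\|F_1\|_{A_\alpha^2}^2+\|F_2\|_{A_\alpha^2}^2)$.

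What your route buys is economy:
\begin{itemize}
\item complex $f$ is handled in one pass;
\item no rotation is needed, since Theorem~\ref{thm-E} already allows every $\zeta$;
\item your dilation plus monotone-convergence argument actually justifies $h,g\in A_\alpha^2$, which the paper takes for granted when it writes down its Parseval identity.
\end{itemize}
The paper's route fits its Riesz-conjugate theme, since it views each real component as the real part of an analytic function.

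One remark of yours is wrong: the hypothesis $f(0)=0$ is not what makes the cross term vanish. With your normalization $g(0)=0$, the angular mean of $hg$ on every circle is $h(0)g(0)=0$, whatever $h(0)$ is. A nonzero $h(0)$ only contributes $|h(0)|^2$ to $\|h\|_{A_\alpha^2}^2$; it does not enter the cross term. So your argument proves the inequality for every $f\in a_\alpha^2$, and the hypothesis $f(0)=0$ is superfluous. The same holds for the paper's proof: keeping the constant term $c=f_j(0)$ in $F_j$ gives $\|F_j\|_{A_\alpha^2}^2=2\|f_j\|_{a_\alpha^2}^2-c^2\le 2\|f_j\|_{a_\alpha^2}^2$.
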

 \begin{rem}
 For $p=2$, Theorem~\ref{thm-1.2} yields
 \[
 \int_{-1}^{1}(1-|x|)^{\alpha+1}|f(x)|^2\,dx
 \le 2\pi\int_{\mathbb D}|f(z)|^2\,dA_\alpha(z),
 \]
 since
 \[
 \sec^2\!\left(\frac{\pi}{4}\right)=2.
 \]
 
 On the other hand, Theorem~\ref{thm-1.3} asserts that for every
 $f\in a_\alpha^2$ with $f(0)=0$,
 \[
 \int_0^1 |f(x\zeta)|^2(1-x)^{\alpha+1}\,dx
 \le 2\lambda_\alpha \|f\|_{a_\alpha^2}^2,
 \qquad \zeta\in\partial\mathbb D.
 \]
 Applying this inequality with $\zeta=1$ and $\zeta=-1$, and then adding the resulting inequalities, we obtain
 \[
 \int_{-1}^{1}(1-|x|)^{\alpha+1}|f(x)|^2\,dx
 \le 4\lambda_\alpha \|f\|_{a_\alpha^2}^2.
 \]
 
 Hence, the constant in Theorem \ref{thm-1.3} is smaller than that in Theorem \ref{thm-1.2} whenever $4\lambda_\alpha<2\pi$, 
 or equivalently, $\lambda_\alpha<\frac{\pi}{2}$.
 In particular, if $\alpha\ge0$, then by Theorem~\ref{thm-E},
 \[
 \lambda_\alpha\le\frac{1}{1+\alpha},
 \]
 and therefore,
 \[
 4\lambda_\alpha
 \le\frac{4}{1+\alpha}
 \le4
 <2\pi.
 \]
 Consequently, for every $\alpha\ge0$, Theorem \ref{thm-1.3} provides a better constant than that obtained in Theorem \ref{thm-1.2} when $p=2$.
 \end{rem}
 %Our second main result establishes an exact area-weight analogue of the Frazer and Riesz-Fejér directional restrictions, shifting the classical boundary-integral controls to full-domain area bounds.
\section{\textbf{Application to M\"obius invariant function spaces}}

M\"obius invariant function spaces form an important class of function spaces in complex analysis. Their defining property is that they are invariant under M\"obius transformations of the unit disk. This property makes them useful in studying geometric and analytic problems. Well-known examples include the Bloch space, BMOA, Besov spaces, Dirichlet spaces, and the $Q_s$-spaces.

In order to unify these classical function spaces, Zhu introduced the family of M\"obius invariant spaces $Q(n,p,\alpha)$ in \cite{Zhu-IJM-2007}. Let $0<p<\infty$, $\alpha>-1$, and $n\in\mathbb N$. An analytic function $f$ in $\mathbb D$ belongs to $Q(n,p,\alpha)$ if
\begin{equation}\label{Qanalytic}
	\|f\|_{Q(n,p,\alpha)}^{p}
	:=
	\sup_{a\in\mathbb D}
	\int_{\mathbb D}
	\left|(f\circ\sigma_a)^{(n)}(z)\right|^{p}
	(1-|z|^{2})^{\alpha}\,dA(z)
	<\infty,
\end{equation}
where
\[
\sigma_a(z)=\frac{a-z}{1-\overline{a}z},
\qquad a\in\mathbb D,
\]
is the Möbius automorphism of $\mathbb D$ interchanging $0$ and $a$.

Since every automorphism of the unit disk can be written as a rotation composed with $\sigma_a$, the seminorm \eqref{Qanalytic} is invariant under Möbius transformations. Consequently,
\[
f\in Q(n,p,\alpha)
\quad \mbox{is equivalent to saying}\quad
f\circ\sigma\in Q(n,p,\alpha)
\]
for every $\sigma\in{\rm Aut}(\mathbb D)$, and
\[
\|f\circ\sigma\|_{Q(n,p,\alpha)}
=
\|f\|_{Q(n,p,\alpha)}.
\]

The family $Q(n,p,\alpha)$ provides a unified framework for many classical function spaces. When $p\ge1$, the quantity $|f(0)|+\|f\|_{Q(n,p,\alpha)}$ defines a complete norm on $Q(n,p,\alpha)$, making it a Banach space. For $0<p<1$, $Q(n,p,\alpha)$ is generally not a Banach space, but it is complete as a metric space. It coincides with the Bloch space whenever $np<\alpha+1$, reduces to the Besov space when $np=\alpha+2$, and contains the classical $Q_s$-spaces and BMOA as important special cases (see \cite{Zhu-IJM-2007}). Moreover, these spaces admit elegant characterizations in terms of Carleson measures and lacunary series, making them particularly useful in operator theory and geometric function theory.

Parallel to these developments, harmonic mappings have become an important object of study because of their close connections with quasiconformal mappings, minimal surfaces, nonlinear elasticity, and geometric function theory.  

%Consequently, many classical analytic spaces have been successfully extended to the harmonic setting.

Motivated by these developments, Sun {\it et al.} \cite{Sun-Liu-Wang-PA-2026} recently have introduced the harmonic M\"obius invariant spaces $Q_h(n,p,\alpha)$. A harmonic mapping $f=h+\overline g$ belongs to $Q_h(n,p,\alpha)$ if
\begin{equation}\label{Qharmonic}
	\|f\|_{Q_h(n,p,\alpha)}^{p}
	=
	\sup_{a\in\mathbb D}
	\int_{\mathbb D}
	\left(
	\left|(h\circ\sigma_a)^{(n)}(z)\right|
	+
	\left|(g\circ\sigma_a)^{(n)}(z)\right|
	\right)^p
	(1-|z|^2)^\alpha
	\,dA(z)
	<\infty.
\end{equation}

When $n=1$, this seminorm admits the equivalent representation
\[
\|f\|_{Q_h(1,p,\alpha)}^{p}
\approx
\sup_{a\in\mathbb D}
\int_{\mathbb D}
\Lambda_{f\circ\sigma_a}(z)^p
(1-|z|^2)^\alpha\,dA(z),
\]
where
\[
\Lambda_f(z)=|f_z(z)|+|f_{\overline z}(z)|.
\]
For two nonnegative quantities $A$ and $B$, we write $A \lesssim B$ if there exists a constant $C>0$ such that
$A \leq C B$. Similarly, $A \gtrsim B$ means that $B \lesssim A$, and $A \approx B$ means that both $A \lesssim B$ and $A \gtrsim B$ hold.
\vspace{1mm}

Since
\[
\Lambda_f(z)
\le |\nabla f(z)|
\le \sqrt2\,\Lambda_f(z),
\]
the seminorm can equally be formulated in terms of the Euclidean gradient. Here, $|\nabla f(z)|:=(|f_x(z)|^2+|f_y(z)|^2)^{1/2}$.

The spaces $Q_h(n,p,\alpha)$ preserve the M\"obius invariance of the analytic theory while incorporating the geometry of harmonic mappings. They provide a common framework for studying harmonic Bloch spaces, harmonic $Q_s$-spaces, harmonic BMO-type spaces, and quasiregular harmonic mappings.

Although several important developments have recently been made concerning harmonic M\"obius invariant spaces, many fundamental problems remain open. 
\vspace{1mm}

Motivated by the classical Riesz conjugate theorem, Sun {\it et al.} recently established its harmonic analogue for the first-order M\"obius invariant space $Q_h(1,p,\alpha)$ in \cite[Theorem 3.1]{Sun-Liu-Wang-PA-2026} whenever $1+\alpha <p<2+\alpha$. 
\vspace{1mm}

To the best of our knowledge, Riesz-Fej\'er inequalities for the classes $Q(n,p,\alpha)$ and $Q_h(n,p,\alpha)$ have not previously been studied. In this section, we apply Theorems~\ref{thm-1.1} and \ref{thm-1.2} to establish Riesz conjugate theorems and Riesz--Fej\'er inequalities for these spaces.
\vspace{1mm}

We first record a useful connection between the M\"obius invariant spaces
$Q(n,p,\alpha)$ and weighted Bergman spaces in the following result. 
\begin{lem} \label{lem-2.1}
Let $0 < p < \infty$ and $\alpha>np-1$. Then $Q(n,p,\alpha)\subset A_{\alpha-np}^p$ and $Q_h(n,p,\alpha)\subset a_{\alpha-np}^p$.
\end{lem}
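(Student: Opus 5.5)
The plan is to test the defining supremum at the single point $a=0$ and then use the classical characterization of weighted Bergman spaces by derivatives. Since $\sigma_0(z)=-z$, for analytic $f$ we have $(f\circ\sigma_0)^{(n)}(z)=(-1)^n f^{(n)}(-z)$. The weight $(1-|z|^2)^\alpha\,dA(z)$ is rotation invariant, so
\[
\int_{\mathbb D}|f^{(n)}(z)|^p(1-|z|^2)^{\alpha}\,dA(z)\le \|f\|_{Q(n,p,\alpha)}^p<\infty .
\]
Put $\beta:=\alpha-np$; the hypothesis $\alpha>np-1$ says exactly that $\beta>-1$. The display then reads
\[
\int_{\mathbb D}\bigl|(1-|z|^2)^n f^{(n)}(z)\bigr|^p\,dA_\beta(z)<\infty .
\]

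The key step is the standard derivative characterization of weighted Bergman spaces. For $\beta>-1$, $0<p<\infty$ and $n\in\mathbb N$, an analytic $f$ lies in $A^p_\beta$ if and only if $(1-|z|^2)^n f^{(n)}\in L^p(dA_\beta)$, with
\[
\|f\|_{A^p_\beta}^p\approx \sum_{k=0}^{n-1}|f^{(k)}(0)|^p+\int_{\mathbb D}|f^{(n)}|^p(1-|z|^2)^{np}\,dA_\beta .
\]
This is in Zhu's operator theory book and in Hedenmalm--Korenblum--Zhu. Applying it gives $f\in A^p_{\alpha-np}$; the finitely many Taylor coefficients at $0$ are harmless.

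I expect this characterization to be the main obstacle if one does not simply cite it. To prove it, I would reduce to $n=1$ by induction, applying the case $n=1$ to $f^{(k)}$ with weights $\beta+(n-1-k)p$. The case $n=1$ is the statement that $f'\in A^p_{\beta+p}$ implies $f\in A^p_\beta$. For $p\ge1$, I would write $f(z)-f(0)=\int_0^1 f'(tz)z\,dt$ and apply a Hardy-type inequality in the radial variable. For $0<p<1$, I would instead use the subharmonicity of $|f'|^p$ on pseudo-hyperbolic discs, or a dyadic decomposition of the radius, to control $M_p(r,f)$ by a weighted area integral of $f'$.

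For the harmonic statement, write $f=h+\overline g$. Since $|(h\circ\sigma_0)^{(n)}|\le |(h\circ\sigma_0)^{(n)}|+|(g\circ\sigma_0)^{(n)}|$, and likewise for $g$, finiteness of $\|f\|_{Q_h(n,p,\alpha)}$ gives $h^{(n)},g^{(n)}\in L^p\bigl((1-|z|^2)^\alpha dA\bigr)$. The analytic case then yields $h,g\in A^p_{\alpha-np}$. Finally,
\[
|f|^p\le(|h|+|g|)^p\le C_p\bigl(|h|^p+|g|^p\bigr),
\]
with $C_p=\max(1,2^{p-1})$, so $f\in a^p_{\alpha-np}$.
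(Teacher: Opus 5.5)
Your proposal is correct and follows the paper's proof: test the supremum at $a=0$ (where $\sigma_0(z)=-z$), apply the derivative characterization of $A^p_\beta$ from Zhu's book with $\beta=\alpha-np>-1$, and treat $f=h+\overline{g}$ component by component. Your harmonic step, using $|h^{(n)}|,|g^{(n)}|\le |h^{(n)}|+|g^{(n)}|$, is cleaner than the paper's, which writes $(|h^{(n)}|+|g^{(n)}|)^p$ as $|h^{(n)}|^p+|g^{(n)}|^p$ (false in general, though harmless since only the individual bounds are used); the one slip is in your optional sketch of the characterization, where the intermediate weight for $f^{(k)}$ should be $\beta+kp$, not $\beta+(n-1-k)p$.
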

\begin{pf} [{\bf Proof}]
Let $f \in Q(n,p,\alpha)$. Then
\[
\|f\|_{Q(n,p,\alpha)}^p
=
\sup_{a\in\mathbb D}
\int_{\mathbb D}
\left|(f\circ\sigma_a)^{(n)}(z)\right|^p
(1-|z|^2)^\alpha\,dA(z).
\]
Taking $a=0$ and observing that $\sigma_0(z)=-z$, we obtain
\[
\begin{aligned}
	\|f\|_{Q(n,p,\alpha)}^p
	&\geq
	\int_{\mathbb D}
	\left|(f\circ\sigma_0)^{(n)}(z)\right|^p
	(1-|z|^2)^\alpha\,dA(z)\\
	&=
	\int_{\mathbb D}
	|f^{(n)}(-z)|^p
	(1-|z|^2)^\alpha\,dA(z)\\
	&=
	\int_{\mathbb D}
	|f^{(n)}(z)|^p
	(1-|z|^2)^\alpha\,dA(z),
\end{aligned}
\]
where the last equality follows from the change of variable $z\mapsto -z$.
Consequently, $f\in Q(n,p,\alpha)$ implies $f^{(n)}\in A_\alpha^p$ for $\alpha>-1$.
On the other hand, for $\beta>-1$, the standard derivative characterization of
Bergman spaces from \cite[Theorem 4.28]{Zhu-OTFS-2007} asserts that
\[
f\in A_\beta^p
\quad \mbox{if and only if} \quad
(1-|z|^2)^n f^{(n)}(z)\in L^p(\mathbb D,dA_\beta),
\]
or, equivalently,
\[
f\in A_\beta^p
\quad \mbox{reduces to} \quad
\int_{\mathbb D}
|f^{(n)}(z)|^p
(1-|z|^2)^{\beta+np}\,dA(z)<\infty.
\]
Choosing $\beta=\alpha-np$, we therefore obtain the continuous inclusion
\[
Q(n,p,\alpha)\subset A_{\alpha-np}^p,
\]
whenever the parameters are in the range $\alpha>np-1$.

An analogous observation applies to the harmonic M\"obius invariant spaces.
Indeed, let $f=h+\overline{g}\in Q_h(n,p,\alpha)$. By definition,
\[
\|f\|_{Q_h(n,p,\alpha)}^p
=
\sup_{a\in\mathbb D}
\int_{\mathbb D}
\left(
\left|(h\circ\sigma_a)^{(n)}(z)\right|
+
\left|(g\circ\sigma_a)^{(n)}(z)\right|
\right)^p
(1-|z|^2)^\alpha\,dA(z).
\]
Once again, taking $a=0$ and using $\sigma_0(z)=-z$ yields
\[
\begin{aligned}
	\|f\|_{Q_h(n,p,\alpha)}^p
	&\geq
	\int_{\mathbb D}
	\left(
	|h^{(n)}(-z)|+|g^{(n)}(-z)|
	\right)^p
	(1-|z|^2)^\alpha\,dA(z)\\
	&=
	\int_{\mathbb D}
	\left(
	|h^{(n)}(z)|+|g^{(n)}(z)|
	\right)^p
	(1-|z|^2)^\alpha\,dA(z)\\
	&=
	\int_{\mathbb D}
	\left(
	|h^{(n)}(z)|^p+|g^{(n)}(z)|^p
	\right)
	(1-|z|^2)^\alpha\,dA(z).
\end{aligned}
\]
Hence, $f\in Q_h(n,p,\alpha)$ leads to 
\[
\int_{\mathbb D}
|h^{(n)}(z)|^p
(1-|z|^2)^\alpha\,dA(z)
<\infty,
\]
and
\[
\int_{\mathbb D}
|g^{(n)}(z)|^p
(1-|z|^2)^\alpha\,dA(z)
<\infty
\]
for $\alpha>-1$. 
Thus, the corresponding derivative characterization of weighted Bergman spaces gives $h\in A_{\alpha-np}^p$ and $g\in A_{\alpha-np}^p$ whenever $\alpha>np-1$. Therefore, $f\in a_{\alpha-np}^p$. This shows that $Q_h(n,p,\alpha)\subset a_{\alpha-np}^p$ whenever $\alpha>np-1$, as desired.
\end{pf}

This shows that both the analytic and harmonic M\"obius invariant spaces
are continuously embedded in their corresponding weighted Bergman spaces.
\vspace{1mm}

As an application of Theorem \ref{thm-1.1}, together with Lemma \ref{lem-2.1}, we deduce the following Riesz conjugate theorem for the aforementioned spaces.
\begin{thm}
Let $0 < p < \infty$ and $\alpha>np-1$. 
\begin{enumerate} [label=(\roman*)]
	\item If $f =u+iv$ is a harmonic $K$-quasiregular function in $\mathbb{D}$ and $u\in Q_h(n,p,\alpha)$, then $v \in Q_h(n,p,\alpha)$.
	\item If $f =u+iv$ is analytic in $\mathbb{D}$ and $u\in Q(n,p,\alpha)$, then $v \in Q(n,p,\alpha)$.
\end{enumerate}
\end{thm}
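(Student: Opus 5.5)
The plan is to avoid comparing $n$-th derivatives directly, and to reduce everything to a pointwise comparison of first derivatives. For a first-order comparison the quasiregularity hypothesis is tailor-made. Write $f=h+\overline g$ with $g(0)=0$. Then $u=\mathrm{Re}\,(h+g)$ and $v=\mathrm{Im}\,(h-g)$ up to additive constants. So the analytic and co-analytic parts of $u$ are $\tfrac12(h+g)$ and $\tfrac12\overline{(h+g)}$, and those of $v$ are $\tfrac1{2i}(h-g)$ and $-\tfrac1{2i}\overline{(h-g)}$, again up to constants. Consequently $\|u\|_{Q_h(n,p,\alpha)}^p$ is comparable to
\[
\sup_{a\in\mathbb D}\int_{\mathbb D}|((h+g)\circ\sigma_a)^{(n)}|^p(1-|z|^2)^\alpha\,dA,
\]
and the same holds for $v$ with $h-g$ in place of $h+g$. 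The quasiregularity condition $|g'|\le k|h'|$ gives the pointwise bound
\[
|h'-g'|\le(1+k)|h'|\le\frac{1+k}{1-k}\,|h'+g'|=K|h'+g'|,
\]
and this survives composition with $\sigma_a$, since both sides get multiplied by $|\sigma_a'|$. The key consequence is $|(h-g)\circ\sigma_a)'|\le K|((h+g)\circ\sigma_a)'|$ on $\mathbb D$ for every $a$.

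\textbf{Route 1 (Bloch-type characterization).} In the range $\alpha>np-1$, i.e.\ $np<\alpha+1$, Zhu's result quoted above says $Q(n,p,\alpha)$ coincides with the Bloch space. I would first verify, or cite from Sun, Liu and Wang, the harmonic analogue: $f=h+\overline g\in Q_h(n,p,\alpha)$ if and only if
\[
\sup_z(1-|z|^2)(|h'(z)|+|g'(z)|)<\infty.
\]
If the characterization is not available, the idea to obtain it is to apply the analytic characterization separately to $h$ and $g$, using $(|h^{(n)}|+|g^{(n)}|)^p\approx|h^{(n)}|^p+|g^{(n)}|^p$. Granting this, the Bloch seminorm of $v$ is $(1-|z|^2)|h'-g'|$, which is at most $K(1-|z|^2)|h'+g'|$, the Bloch seminorm of $u$ up to a factor. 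Hence $v\in Q_h(n,p,\alpha)$.

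\textbf{Route 2 (direct estimate, as a fallback).} Fix $a$ and set $H=(h+g)\circ\sigma_a$ and $G=(h-g)\circ\sigma_a$. I would apply the derivative characterization \cite[Theorem 4.28]{Zhu-OTFS-2007} to $H'$ and $G'$ with weight exponent $\alpha-(n-1)p>-1$, which is where $\alpha>np-1$ enters. This converts $\int|G^{(n)}|^p(1-|z|^2)^\alpha\,dA$ into $\int|G'|^p(1-|z|^2)^{\alpha-(n-1)p}\,dA$ plus the initial terms $|G^{(j)}(0)|^p$ for $1\le j\le n-1$. The integral term is then bounded by $K^p$ times the corresponding integral for $H$, and one converts back. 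The difficulty is that the initial terms are not obviously controlled uniformly in $a$. They are of the size of $(1-|a|^2)^j|(h\pm g)^{(j)}(a)|$, so bounding them again needs the Bloch-type bound from Route 1.

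Part (ii) is the special case $g\equiv0$, $K=1$ of the same argument: $f$ analytic gives $v=\mathrm{Re}(-if)$, and $|(-if)'|=|f'|$ pointwise, so $v$ and $u$ have identical Bloch (hence $Q$) seminorms up to absolute constants. The main obstacle, in either route, is the uniform-in-$a$ control of the Möbius-invariant quantity. Theorem \ref{thm-1.1} with Lemma \ref{lem-2.1} alone only yields $v\in a^p_{\alpha-np}$, which is weaker than membership in $Q_h(n,p,\alpha)$. I therefore expect the real work to be establishing, or citing precisely, the harmonic Bloch characterization of $Q_h(n,p,\alpha)$ for $np<\alpha+1$.
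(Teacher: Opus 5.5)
Your Route 1 is correct, and it is essentially the argument in the Remark that follows this theorem in the paper. For $\alpha>np-1$ one has $Q(n,p,\alpha)=\mathcal B$ and $Q_h(n,p,\alpha)=\mathcal B_h$, and quasiregularity gives the pointwise bound $|\nabla v|=|h'-g'|\le K|h'+g'|=K|\nabla u|$; in the analytic case (ii) this becomes $|\nabla v|=|\nabla u|$.

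The proof the paper actually displays is different: it invokes Lemma \ref{lem-2.1} and Theorem \ref{thm-1.1}, and the Pel\'aez--R\"atty\"a conjugation theorem for (ii). Your objection to that route is justified. It only gives $v\in a^p_{\alpha-np}$ (resp.\ $A^p_{\alpha-np}$), and $Q_h(n,p,\alpha)=\mathcal B_h$ is a proper subspace of $a^p_{\alpha-np}$. For instance, $(1-z)^{-\gamma}$ with $0<\gamma<1/p$ lies in $A^p_{\alpha-np}$ but is not Bloch. So that conclusion does not put $v$ back in $Q_h(n,p,\alpha)$, and the Bloch-space argument is the one that actually proves the statement.

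One simplification of your plan is available. Since $u$ and $v$ are real-valued, your own computation gives the exact identities $\|u\|_{Q_h(n,p,\alpha)}=\|h+g\|_{Q(n,p,\alpha)}$ and $\|v\|_{Q_h(n,p,\alpha)}=\|h-g\|_{Q(n,p,\alpha)}$, because additive constants are killed by the $n$-th derivative. So you only need Zhu's analytic identification $Q(n,p,\alpha)=\mathcal B$, not the harmonic characterization you flagged as the main obstacle, and Route 2 can be dropped.
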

\begin{pf}[{\bf Proof}]
For $\alpha>np-1$, by virtue of Lemma \ref{lem-2.1}, we have $Q_h(n,p,\alpha)\subset a_{\alpha-np}^p$. The conclusion now immediately follows from Theorem \ref{thm-1.1}. For the analytic case, we have $Q(n,p,\alpha)\subset A_{\alpha-np}^p$ and the result follows from \cite[Corollary 6]{Pelaez-Rattya-AMP-2020}. This concludes the proof.
\end{pf}

\begin{rem}
Alternatively, the preceding result can be obtained from the connection
between the spaces $Q(n,p,\alpha)$, $Q_h(n,p,\alpha)$, and the Bloch
space. The analytic Bloch space $\mathcal{B}$ consists of all analytic functions $f$ in $\mathbb{D}$ such that $\|f\|_{\mathcal{B}}:=|f(0)|+\sup_{z\in\mathbb{D}}(1-|z|^2)|f'(z)|<\infty$. Similarly, the harmonic Bloch space $\mathcal{B}_{h}$ consists of all harmonic functions $f$ in $\mathbb{D}$ such that $\|f\|_{\mathcal{B}_{h}}:=|f(0)|+\sup_{z\in\mathbb{D}}(1-|z|^2)|\nabla f(z)|<\infty$, where $|\nabla f(z)|=(|f_x(z)|^2+|f_y(z)|^2)^{1/2}$.

It is known that, whenever $\alpha>np-1$, $Q(n,p,\alpha)=\mathcal{B}$ and $Q_h(n,p,\alpha)=\mathcal{B}_{h}$ (see \cite{Sun-Liu-Wang-PA-2026,Zhu-IJM-2007}).

Let $u\in\mathcal{B}_h$ be real-valued, and let $v$ be its unique harmonic conjugate with $v(0)=0$. Then $F=u+iv$ is analytic in $\mathbb{D}$. By the Cauchy--Riemann equations, it follows that
\[
|\nabla u(z)|^2
=
u_x(z)^2+u_y(z)^2
=
v_x(z)^2+v_y(z)^2
=
|\nabla v(z)|^2,
\]
and therefore $|\nabla u(z)|=|\nabla v(z)|$ for $z\in\mathbb{D}$. Consequently,
\[
\begin{aligned}
	\sup_{z\in\mathbb{D}}
	(1-|z|^2)|\nabla v(z)|
	&=
	\sup_{z\in\mathbb{D}}
	(1-|z|^2)|\nabla u(z)|<\infty.
\end{aligned}
\]
Since $v(0)=0$, we further have
\[
\|v\|_{\mathcal{B}_h}
=
\sup_{z\in\mathbb{D}}
(1-|z|^2)|\nabla v(z)|
\leq
|u(0)|
+
\sup_{z\in\mathbb{D}}
(1-|z|^2)|\nabla u(z)|
=
\|u\|_{\mathcal{B}_h}.
\]
Hence, $v \in \mathcal{B}_h$. 
\vspace{1mm}

On the other hand, let
$
f=u+iv=h+\overline{g}
$
be a $K$-quasiregular mapping in $\mathbb{D}$. Then $|g'(z)|\le k|h'(z)|$, where $k=\frac{K-1}{K+1}<1
$.
Since $u=\Re(h+g)$ and $v=\Im(h-g)$,
we have
$$
|\nabla u|^2
=
|h'|^2+|g'|^2+2\Re(h'\overline{g'}),
$$
and
$$
|\nabla v|^2
=
|h'|^2+|g'|^2-2\Re(h'\overline{g'}).
$$
Using the estimate $|\Re(h'\overline{g'})|\le |h'||g'| \le k|h'|^2$, it follows that
$$
(1-k)^2|h'|^2
\le
|\nabla u|^2,\,
|\nabla v|^2
\le
(1+k)^2|h'|^2.
$$
Hence,
$$
\frac{1-k}{1+k}\,|\nabla u|
\le
|\nabla v|
\le
\frac{1+k}{1-k}\,|\nabla u|,
$$
which is equivalent to
$$
\frac1K\,|\nabla u|
\le
|\nabla v|
\le
K\,|\nabla u|.
$$
Therefore,
$$
\sup_{z\in\mathbb D}
(1-|z|^2)|\nabla v(z)|
\le
K
\sup_{z\in\mathbb D}
(1-|z|^2)|\nabla u(z)|.
$$
Consequently, if $u\in\mathcal{B}_h$, then $v\in\mathcal{B}_h$, and
$
\|v\|_{\mathcal{B}_h}
\le
K\|u\|_{\mathcal{B}_h},
$
assuming the normalization $v(0)=0$. This provides an alternative proof
of the desired result.
\end{rem}

In view of Theorem \ref{thm-1.2}, we establish a Riesz--Fej\'er inequality for the spaces $Q(n,p,\alpha)$ and $Q_h(n,p,\alpha)$.
\begin{thm} \label{thm-2.2}
Let $ 1< p < \infty$ and $\alpha>np-1$.
\begin{enumerate}[label=(\roman*)]
	\item If $f $ is a complex-valued harmonic function belonging to the space $Q_h(n,p,\alpha)$, then the following integral inequality holds:
	$$
	\int_{-1}^{1} (1 - |x|)^{\alpha+1-np} |f(x)|^p \, dx \leq  \pi \, \sec^{p}\left(\dfrac{\pi}{2p}\right)\, \|f\|^p_{a^p_{\alpha-np}}.
	$$
	\item If $f \in Q(n,p,\alpha)$, then the following integral inequality holds:
	$$
	\int_{-1}^{1} (1 - |x|)^{\alpha+1-np} |f(x)|^p \, dx \leq  \pi \,  \|f\|^p_{A^p_{\alpha-np}}.
	$$
\end{enumerate}
\end{thm}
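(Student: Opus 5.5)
The plan is to reduce both parts to the weighted Bergman results already stated, using the embedding of Lemma \ref{lem-2.1} with shifted weight $\beta:=\alpha-np$. The hypothesis $\alpha>np-1$ is exactly $\beta>-1$, so $A^p_\beta$ and $a^p_\beta$ are genuine weighted Bergman spaces. In both parts the right-hand side is a Bergman norm on $a^p_\beta$ (respectively $A^p_\beta$), not the $Q$-seminorm. So all that is needed is that $f$ lies in the relevant Bergman space, so that the inequality is meaningful and finite, and then an application of the Bergman-space Riesz--Fej\'er inequality with parameter $\beta$.

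For part (i), take $f=h+\overline{g}\in Q_h(n,p,\alpha)$. Lemma \ref{lem-2.1} gives $f\in a^p_{\alpha-np}=a^p_\beta$. Since $1<p<\infty$ and $-1<\beta<\infty$, Theorem \ref{thm-1.2} applies with $\alpha$ replaced by $\beta$ and gives
\[
\int_{-1}^{1}(1-|x|)^{\beta+1}|f(x)|^p\,dx\le \pi\sec^p\!\left(\frac{\pi}{2p}\right)\int_{\mathbb D}|f(z)|^p\,dA_\beta(z).
\]
Substituting $\beta+1=\alpha+1-np$ and recognizing the right-hand integral as $\|f\|^p_{a^p_{\alpha-np}}$ gives (i).

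For part (ii), take $f\in Q(n,p,\alpha)$. Lemma \ref{lem-2.1} gives $f\in A^p_\beta$, and Theorem \ref{thm-D} (Zhu's weighted Riesz--Fej\'er inequality, valid for all $0<p<\infty$ and $\beta>-1$) gives
\[
\int_{-1}^{1}(1-|x|)^{\beta+1}|f(x)|^p\,dx\le\pi\|f\|^p_{A^p_\beta},
\]
which is (ii) after the same substitution. As a consistency check, (ii) could also be obtained from (i) with $g=0$, but only with the worse constant $\pi\sec^p(\pi/(2p))$, so Zhu's theorem is the right tool there.

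The only step needing care is the membership $f\in a^p_\beta$ (or $A^p_\beta$) supplied by Lemma \ref{lem-2.1}. This rests on the derivative characterization $f\in A^p_\beta\iff\int|f^{(n)}|^p(1-|z|^2)^{\beta+np}dA<\infty$, which requires $\beta>-1$; that is precisely the assumed range $\alpha>np-1$. Everything else is direct substitution, so I do not anticipate a real obstacle beyond checking that this parameter range is respected.
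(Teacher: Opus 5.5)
Your proposal is correct and is the paper's argument: Lemma \ref{lem-2.1} places $f$ in $a^p_{\alpha-np}$ (resp.\ $A^p_{\alpha-np}$), and then Theorem \ref{thm-1.2} (resp.\ Theorem \ref{thm-D}) is applied with weight parameter $\alpha-np>-1$. The paper states exactly this combination and omits the details as immediate.
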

\begin{pf}[{\bf Proof}]
The proof is an immediate consequence of Theorem \ref{thm-1.2}, Theorem \ref{thm-D}, and Lemma \ref{lem-2.1}. We therefore omit the details.
\end{pf}

Our next objective is to present another application of our main result to the space $Q_h(n,p,\alpha)$. To this end, we first note the following elementary observation. If $f=h+\overline{g}$ is a harmonic mapping in $\mathbb{D}$ with $h,g\in A_\alpha^p$, where $0<p<\infty$ and $\alpha>-1$, then clearly $f\in a_\alpha^p$. It is natural to ask whether the converse also holds. The following lemma shows that this is indeed the case.
 \begin{lem}\label{lem-2.2}
	Let $0 < p < \infty$ and $\alpha>-1$. If $f = h + \bar{g} \in a_\alpha^p$ is a harmonic mapping in $\mathbb{D}$, then $h, \,g \in A_\alpha^p$.
\end{lem}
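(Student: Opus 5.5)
The plan is to reduce the statement to the boundedness of harmonic conjugation on real-valued weighted harmonic Bergman functions. We use the normalization $g(0)=0$ and write $f=u+iv$ with $u=\operatorname{Re} f$ and $v=\operatorname{Im} f$. Since $|u|,|v|\le |f|$, both $u$ and $v$ are real-valued harmonic functions in $a_\alpha^p$, with $\|u\|_{a_\alpha^p},\|v\|_{a_\alpha^p}\le \|f\|_{a_\alpha^p}$. From $f=h+\overline{g}$ we read off
\[
u=\operatorname{Re}(h+g),\qquad v=\operatorname{Im}(h-g).
\]
So $h+g$ is an analytic function whose real part is $u$, and $h-g$ is an analytic function whose imaginary part is $v$.

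First we show $h+g\in A_\alpha^p$. Let $\tilde u$ be the harmonic conjugate of $u$ with $\tilde u(0)=0$. Then $h+g=u+i\tilde u+ic_1$ for some real constant $c_1$, since two analytic functions with the same real part differ by an imaginary constant. The function $F_1=u+i\tilde u$ is analytic, so it is harmonic $K$-quasiregular with $K=1$: in its decomposition the co-analytic part vanishes, so $|g'|\le k|h'|$ holds trivially with $k=0$. Theorem \ref{thm-1.1} then gives $\|\tilde u\|_{a^p_\alpha}\le C_{p,1,\alpha}\|u\|_{a^p_\alpha}$. Alternatively, \cite[Corollary 6]{Pelaez-Rattya-AMP-2020} can be quoted directly.

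Next we show $h-g\in A_\alpha^p$. Let $\tilde v$ be the conjugate of $v$ with $\tilde v(0)=0$. The analytic function $i(v+i\tilde v)=-\tilde v+iv$ has imaginary part $v$, so $h-g=-\tilde v+iv+c_2$ for a real constant $c_2$. The same argument bounds $\|\tilde v\|_{a^p_\alpha}$ by $\|v\|_{a^p_\alpha}$.

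Finally we recover $h$ and $g$ from
\[
h=\tfrac12\bigl((h+g)+(h-g)\bigr),\qquad g=\tfrac12\bigl((h+g)-(h-g)\bigr).
\]
$A_\alpha^p$ is a vector space: for $p\ge1$ this follows from Minkowski's inequality, and for $0<p<1$ from $|a+b|^p\le|a|^p+|b|^p$. Constants lie in $A^p_\alpha$ because $dA_\alpha$ is a finite measure. Combining these facts, $h,g\in A_\alpha^p$. If a quantitative estimate is wanted, we note that $c_1=\operatorname{Im}(h(0)+g(0))=\operatorname{Im} h(0)=\operatorname{Im} f(0)$ and $c_2=\operatorname{Re}(h(0)-g(0))=\operatorname{Re} f(0)$. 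Since $|f(0)|$ is controlled by $\|f\|_{a^p_\alpha}$ by a standard pointwise estimate for harmonic Bergman functions, this yields $\|h\|_{A^p_\alpha}+\|g\|_{A^p_\alpha}\le C\|f\|_{a^p_\alpha}$.

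The only nontrivial ingredient is the boundedness of the conjugation operator on $\mathcal{A}_{h,\alpha}^{p}(\mathbb{D};\mathbb{R})$ for the full range $0<p<\infty$, including $p\le1$. This is the step I expect to need care. The point to check is that Theorem \ref{thm-1.1} applies to analytic functions in the degenerate case $K=1$, including points where $h'$ vanishes. If the sense-preserving requirement $J_f>0$ is an issue, I would instead cite \cite[Corollary 6]{Pelaez-Rattya-AMP-2020}, which covers the classical weighted case directly. The rest of the argument is bookkeeping.
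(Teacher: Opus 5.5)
Your proof is correct and follows essentially the paper's route. Both arguments apply the weighted conjugation theorem \cite[Corollary 6]{Pelaez-Rattya-AMP-2020} twice: you use it symmetrically on $\operatorname{Re} f$ and $\operatorname{Im} f$ to get $h\pm g\in A^p_\alpha$, while the paper uses it first on $\operatorname{Re} f$ to get $F=h+g$ and then on $\operatorname{Re}(-ig)=\operatorname{Im} g=(\operatorname{Im}F-\operatorname{Im} f)/2$. Rely on that citation rather than on Theorem~\ref{thm-1.1} with $K=1$. As you suspected, an analytic function with critical points fails the paper's strict sense-preserving requirement $J_f>0$.
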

\begin{pf} [{\bf Proof}]
	Let $f = h + \bar{g}$. Then $|h(z)| \leq |f(z)| + |g(z)|$ in $\mathbb{D}$. For $p > 0$, we have $$|h(z)|^p \leq C_p (|f(z)|^p + |g(z)|^p),$$where $C_p > 0$ is constant depending on $p$. So, it is enough to prove that $g \in A_\alpha^p$ in order to prove $h \in A_\alpha^p$ for $p > 0$. We consider the analytic function $F = h + g$ in $\mathbb{D}$. Clearly, $u := \Re(F) = \Re(f)$. This shows that $|u| \leq |f|$ in $\mathbb{D}$, and so by the assumption, $u \in a_\alpha^p$ for $ p > 0$. Applying the Riesz theorem for weighted analytic Bergman space \cite[Corollary 6]{Pelaez-Rattya-AMP-2020} to $F$, for $p>0$ we have
	$$\|F\|_{A_\alpha^p} \leq C_{p,\alpha} \|u\|_{a_\alpha^p}.$$
	Therefore, $F \in A_\alpha^p$ for $ p > 0$. 
	Consequently, $\Im(F) \in a_\alpha^p$ for $ p > 0$. Since $\Im(f) = \Im(h) - \Im(g)$ and $\Im(F) = \Im(h) + \Im(g)$, we have $$\Im(g) = \frac{\Im(F) - \Im(f)}{2}.$$
	Using the fact $|a+ib|^p \leq C_p(|a|^p+|b|^p)$, $p>0$, we deduce that 
	$$|\Im(g)|^p \leq \tilde{C}_p \cdot (|\Im(F)|^p + |\Im(f)|^p), \quad p > 0.$$
	Since $\Im(F)$,  $\Im(f) \in a_\alpha^p$, $p > 0$, the preceding inequality yields $\Im(g) \in a_\alpha^p$, $p > 0$.
	
	Define $\tilde{g} = -ig$ in $\mathbb{D}$. Then $\tilde{g}$ is also analytic in $\mathbb{D}$ and $\Re(\tilde{g}) = \Im(g)$. Since $\Im(g) \in a_\alpha^p$ for $p > 0$, we have $\Re(\tilde{g}) \in a_\alpha^p$ for $p > 0$. Applying \cite[Corollary 6]{Pelaez-Rattya-AMP-2020} once again, now to $\tilde{g}$, we deduce that $\tilde{g} \in A_\alpha^p$ for every $p>0$. Therefore, $g \in A_\alpha^p$ for $p > 0$.
	The proof is now complete.
\end{pf}

We conclude this section with the following corollary, which is an immediate consequence of Lemmas \ref{lem-2.1} and \ref{lem-2.2}.
\begin{cor}
Let $ 0< p < \infty$ and $\alpha>np-1$. If $f=h+\overline{g}\in Q_h(n,p,\alpha)$, then $h,g\in A_{\alpha-np}^p$.
\end{cor}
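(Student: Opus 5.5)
The corollary follows by chaining Lemma \ref{lem-2.1} and Lemma \ref{lem-2.2}, with one check on the parameters. Let $f=h+\overline{g}\in Q_h(n,p,\alpha)$ with $0<p<\infty$ and $\alpha>np-1$.

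First, I invoke Lemma \ref{lem-2.1}. Under exactly these hypotheses it gives $Q_h(n,p,\alpha)\subset a_{\alpha-np}^p$, so $f\in a_{\alpha-np}^p$.

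Second, I set $\beta:=\alpha-np$. The assumption $\alpha>np-1$ is equivalent to $\beta>-1$, which is the range of admissible weights required by Lemma \ref{lem-2.2}.

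Third, I apply Lemma \ref{lem-2.2} with the weight $\beta$ and the same $p$. Since $f=h+\overline{g}\in a_\beta^p$ is harmonic, the lemma yields $h,g\in A_\beta^p=A_{\alpha-np}^p$, which is the claim.

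I expect no step to be a real obstacle. The only point needing care is the normalization. The decomposition $f=h+\overline{g}$ in the definition of $Q_h(n,p,\alpha)$ should be the canonical one with $g(0)=0$. Even if it is not, any other decomposition differs from the canonical one by adding a constant to $h$ and subtracting its conjugate from $g$. Constants lie in every $A_\beta^p$ with $\beta>-1$, because $dA_\beta$ is a finite measure. So the conclusion does not depend on which decomposition is used.
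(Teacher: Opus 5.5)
Your proof is correct and is the paper's argument: Lemma~\ref{lem-2.1} places $f$ in $a^p_{\alpha-np}$, and since $\alpha-np>-1$, Lemma~\ref{lem-2.2} then gives $h,g\in A^p_{\alpha-np}$. Your remark that the conclusion does not depend on the choice of decomposition (constants lie in every $A^p_\beta$ with $\beta>-1$) is a correct extra detail that the paper leaves implicit.
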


	\section{\textbf{Proofs of the Main Theorems and Related Results}}
	We first establish a collection of auxiliary results that are essential for the proof of Theorem \ref{thm-1.1}. With these preparatory results in hand, we then prove Theorem \ref{thm-1.1}. 
	\vspace{1mm}
	
	Unless otherwise stated, $D_r(a)$ will denote the open disk centered at $a$ with radius $r$. That is, $D_r(a):=\{z \in \mathbb{C}: |z-a|<r\}$. For a measurable set $E$, we write $A_\alpha(E) := \int_E  dA_\alpha(z)$ for the weighted area of $E$. Set $A_0(E):=A(E)$.
	\begin{thm} \label{thm-3.1}
		Let $u$ be harmonic in $\mathbb{D}$ and suppose $D_r(a) \subset \mathbb{D}$ with $|a|<1/3, r\leq 1/3$. Then for every $p > 0$ there exists a constant $C_{p,\alpha} > 0$ such that
		
		$$|u(a)|^p \le \frac{C_{p,\alpha}}{A_\alpha(D_r(a))} \int_{D_r(a)} |u(z)|^p \, dA_\alpha(z).$$
		
		Here, $C_{p,\alpha}$ depends only on $p$ and $\alpha$.
	\end{thm}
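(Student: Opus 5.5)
The plan is to reduce the statement to the unweighted sub-mean value property for $|u|^p$ and then compare $dA$ with $dA_\alpha$ on the small disk $D_r(a)$. The point is that this disk stays uniformly away from $\partial\mathbb{D}$. Since $|a|<1/3$ and $r\le 1/3$, every $z\in D_r(a)$ satisfies $|z|<2/3$. Hence
\[
\tfrac{5}{9}\le 1-|z|^2\le 1 \qquad\text{on } D_r(a),
\]
so the weight $(1-|z|^2)^\alpha$ is pinched between two positive constants depending only on $\alpha$. Explicitly, with $m_\alpha=\min\{1,(5/9)^\alpha\}$ and $M_\alpha=\max\{1,(5/9)^\alpha\}$, we have
\[
(\alpha+1)m_\alpha\,dA\le dA_\alpha\le (\alpha+1)M_\alpha\,dA \quad\text{on } D_r(a).
\]

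The key analytic input is the classical estimate for harmonic functions. For every $p>0$ there is $C_p$ such that
\[
|u(a)|^p\le \frac{C_p}{A(D_r(a))}\int_{D_r(a)}|u|^p\,dA .
\]
For $p\ge1$ this holds with $C_p=1$: the area mean value property gives $u(a)=\frac{1}{A(D_r(a))}\int_{D_r(a)} u\,dA$, and Jensen's (or H\"older's) inequality finishes it, since $|u|^p$ is subharmonic. For $0<p<1$, $|u|^p$ is not subharmonic for real harmonic $u$, so this is the main obstacle. I would use the standard Fefferman--Stein / Hardy--Littlewood argument. By rescaling (translate and dilate $D_r(a)$ to $\mathbb{D}$), it suffices to show $|u(0)|^p\le C_p\int_{\mathbb{D}}|u|^p\,dA$ for $u$ harmonic on a neighbourhood of $\overline{\mathbb{D}}$; the estimate is scale invariant because both sides are averages.

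For that estimate, let $M(\rho)=\sup_{|z|\le\rho}|u|$ for $1/2\le\rho<1$. For $|z_0|\le\rho$ and $s=(\rho'-\rho)$ with $\rho<\rho'<1$, the mean value property on $D_s(z_0)$ gives
\[
|u(z_0)|\le \frac{1}{A(D_s(z_0))}\int_{D_s(z_0)}|u|^p|u|^{1-p}\,dA\le \frac{M(\rho')^{1-p}}{s^2}\int_{\mathbb{D}}|u|^p\,dA .
\]
Hence $M(\rho)\le (\rho'-\rho)^{-2}M(\rho')^{1-p}I$, where $I=\int_{\mathbb{D}}|u|^p\,dA$. Taking logarithms and iterating along $\rho_j=1-2^{-j}$, the weights $(1-p)^j$ are summable, which yields $M(1/2)\le C_p I^{1/p}$ with a finite $C_p$. (Alternatively, I would invoke the known result of Hardy--Littlewood / Fefferman--Stein that $|u|^p$ is quasi-subharmonic, as used by Das--Rasila.) The only delicate point is the a priori finiteness of $M(\rho')$, which is handled by working on slightly smaller disks and letting the radius tend to $1$.

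Combining the two pieces gives
\[
|u(a)|^p\le \frac{C_p}{A(D_r(a))}\int_{D_r(a)}|u|^p\,dA\le \frac{C_p}{A(D_r(a))}\cdot\frac{1}{(\alpha+1)m_\alpha}\int_{D_r(a)}|u|^p\,dA_\alpha .
\]
Since $A_\alpha(D_r(a))\le(\alpha+1)M_\alpha A(D_r(a))$, we get $\frac{1}{A(D_r(a))}\le\frac{(\alpha+1)M_\alpha}{A_\alpha(D_r(a))}$. Therefore the claim holds with
\[
C_{p,\alpha}=C_p\,\frac{M_\alpha}{m_\alpha}=C_p\,(9/5)^{|\alpha|},
\]
which depends only on $p$ and $\alpha$.
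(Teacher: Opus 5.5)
Your proposal is correct and follows essentially the same route as the paper: it reduces to the unweighted Fefferman--Stein estimate $|u(a)|^p\le \frac{C_p}{A(D_r(a))}\int_{D_r(a)}|u|^p\,dA$, then uses that $(1-|z|^2)^\alpha$ is pinched between constants on $D_r(a)$ to pass between $dA$ and $dA_\alpha$. The differences are cosmetic: you use the uniform bound $5/9\le 1-|z|^2\le 1$ (which gives the explicit constant $C_p(9/5)^{|\alpha|}$) where the paper compares $(1-|z|^2)^\alpha$ with $(1-|a|^2)^\alpha$, and you prove the Fefferman--Stein lemma yourself (your iteration argument checks out) where the paper simply cites it.
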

 \begin{pf}
 We begin by proving the following comparability estimates for $z \in D_r(a)$:
 \begin{enumerate}[label=(\roman*)]
 	\item $1 - |z| \approx  1 - |a|$,\\
 	\item $1 - |z|^2 \approx 1 - |a|^2$,\\
 	\item $(1 - |z|^2)^\alpha \approx (1 - |a|^2)^\alpha$ for $\alpha \in \mathbb{R} $.\\
 	 \end{enumerate}
 	 
 \noindent We first establish (i). Since $|a|<1/3$, we can see that $$1-|a|>\frac{2}{3} \implies \frac{1}{3}< \frac{1}{2}(1-|a|).$$ 
So, for any $z \in D_r(a)$, we have
$$|z-a| < r<\frac{1}{3} < \rho(1-|a|),$$ where $\rho=\frac{1}{2}$.
Using reverse triangle inequality, we get 
 %$$||z| - |a|| \leq |z - a| < \rho (1 - |a|).$$
 %Hence
 %$$|z| \leq |a| + \rho(1 - |a|) \implies 1 - |z| \geq (1 - \rho)(1 - |a|).$$
 %Similarly,
 %$$|z| \geq |a| - \rho(1 - |a|) \implies 1 - |z| \leq (1 + \rho)(1 - |a|).$$ Thus
$$(1 - \rho)(1 - |a|) \leq 1 - |z| \leq (1 + \rho)(1 - |a|).$$
This implies $1 - |z| \approx 1 - |a|$.\\
\noindent Next, we prove (ii). Observe that 
%since we have
 %$$1 - |z|^2 = (1 - |z|)(1 + |z|),$$\
 % and
 %$$1 \leq 1 + |z| \leq 2,$$
%We obtain
 $$1 - |z| \leq 1 - |z|^2 \leq 2(1 - |z|)$$
and
 $$1 - |a| \leq 1 - |a|^2 \leq 2(1 - |a|).$$
Combining these estimates with part (i) gives 
 $$1 - |z|^2 \approx 1 - |a|^2.$$
 It remains to verify (iii). For $\alpha \geq 0$, it is obvious. For $\alpha<0$, let $\alpha=-\beta$, where $\beta >0$. By (ii), there exists positive constants $K_1$ and $K_2$, such that  \[
 K_1(1-|a|^2)\le (1-|z|^2)\le K_2(1-|a|^2).
 \]
Since \(\beta>0\),
 
 \[
 (1-|z|^2)^\beta
 \le K_2^\beta (1-|a|^2)^\beta,
 \]
which implies
\[
 (1-|z|^2)^{-\beta}
 \ge
 K_2^{-\beta}(1-|a|^2)^{-\beta}.
 \]
Similarly,
 \[
 (1-|z|^2)^{-\beta}
 \le
 K_1^{-\beta}(1-|a|^2)^{-\beta}.
 \]
Thus
 \[
 K_2^{-\beta}(1-|a|^2)^{-\beta}
 \le
 (1-|z|^2)^{-\beta}
 \le
 K_1^{-\beta}(1-|a|^2)^{-\beta}.
 \]
Equivalently,
 \[
 K_2^{\alpha}(1-|a|^2)^{\alpha}
 \le
 (1-|z|^2)^{\alpha}
 \le
 K_1^{\alpha}(1-|a|^2)^{\alpha}.
 \]
 Hence, $(1 - |z|^2)^\alpha \approx (1 - |a|^2)^\alpha$ for $\alpha \in \mathbb{R} $.\\
 
 In view of (iii), there exist $C_1, C_2 > 0$ depending only on $\alpha$, such that
 \begin{equation} \label{e-3.1-a}
 	C_1 (1-|a|^2)^\alpha
 	\le
 	(1-|z|^2)^\alpha
 	\le
 	C_2 (1-|a|^2)^\alpha ,
 	\qquad z\in D_r(a).
 \end{equation}
A direct consequence of \eqref{e-3.1-a} is
 \begin{equation}\label{H-K-P1-E-01}
 	C_1 (1+\alpha)\,(1-|a|^2)^\alpha A(D_r(a))
 	\le
 	A_\alpha(D_r(a))
 	\le
 	C_2\, (1+\alpha)\, (1-|a|^2)^\alpha A(D_r(a)).
 \end{equation}
It follows from \eqref{e-3.1-a} that
 \begin{equation}
  \begin{aligned}\label{H-K-P1-E-02}
 	\int_{D_r(a)} |u(z)|^p\, dA(z)
 	&= (1+\alpha)^{-1}\,
 	\int_{D_r(a)}
 	|u(z)|^p
 	(1-|z|^2)^{-\alpha}
 	\, dA_\alpha(z)  \\
 	&\le
 	C_1^{-1}\, (1+\alpha)^{-1}\, (1-|a|^2)^{-\alpha}
 	\int_{D_r(a)}
 	|u(z)|^p\, dA_\alpha(z). 
 \end{aligned}
 \end{equation}
An application of \cite[Lemma 2]{Fefferman-Stein-Acta-1972} yields
 \begin{equation} \label{e-3.3-a}
 	|u(a)|^p
 	\le
 	\frac{C_p}{A(D_r(a))}
 	\int_{D_r(a)}
 	|u(z)|^p\, dA(z)
 \end{equation}
 for some constant $C_p>0$, depending only on $p$.
Consequently, in view of \eqref{H-K-P1-E-02} and \eqref{e-3.3-a},
 \[
 |u(a)|^p
 \le
 \frac{C_p \, C_1^{-1}\, (1+\alpha)^{-1}}{A(D_r(a))}
 (1-|a|^2)^{-\alpha}
 \int_{D_r(a)}
 |u(z)|^p\, dA_\alpha(z).
 \]
Observe from \eqref{H-K-P1-E-01} that
 \[
 \frac{A_\alpha(D_r(a))}{A(D_r(a))}
 \le
 C_2 \, (1+\alpha)\, (1-|a|^2)^\alpha,
 \]
which in turn implies
 \[
 \frac{(1-|a|^2)^{-\alpha}}{A(D_r(a))}
 \le
 \frac{C_2\,(1+\alpha)}{A_\alpha(D_r(a))}.
 \]
Therefore,
 \[
 |u(a)|^p
 \le
 \frac{C_p C_1^{-1} C_2 }{A_\alpha(D_r(a))}
 \int_{D_r(a)}
 |u(z)|^p\, dA_\alpha(z)=\frac{C_{p,\alpha}}{A_\alpha(D_r(a))}
 \int_{D_r(a)}
 |u(z)|^p\, dA_\alpha(z).
 \]
This completes the proof.
 \end{pf}
 
 The above theorem shows that the pointwise value of a harmonic function is controlled by its weighted local average over any disk $D_a(r)$ containing the point such that $r \leq 1/3,\, |a| \leq 1/3$. Geometrically, it asserts that the value of $|u(a)|^p$ cannot be significantly larger than the normalized weighted $L^p$-mass of $u$ on a neighborhood of $a$. The normalization by the weighted area $A_\alpha(D_r(a))$ makes the estimate independent of the size of the disk and is naturally adapted to the weighted measure $dA_\alpha$. In the unweighted case, the corresponding result for harmonic functions was established by Fefferman and Stein \cite{Fefferman-Stein-Acta-1972}, while the analytic version involving the weighted area can be found in \cite{Zhu-OTFS-2007}. The above theorem extends these results to harmonic functions in weighted Bergman spaces.
 \vspace{1mm}
 
 By making use of Theorem \ref{thm-3.1}, we establish the following result for harmonic quasiregular function. 
 \begin{lem} \label{lem-3.1}
 	Let $0<p<\infty$ and let
 	\(
 	f=u+iv
 	\)
 	be a harmonic $K$-quasiregular mapping in $\mathbb D$.
 	Then, there exists a constant $C_{p,\alpha}$ such that
 	\[
 	|f_z(0)|^p
 	\le \left(\frac{1}{1-k}\right)^p C_{p,\alpha}
 	\int_{\mathbb D_{\frac{1}{2}}(0)}
 	|u(z)|^p\, dA_\alpha(z).
 	\]
 \end{lem}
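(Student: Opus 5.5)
Write $f=h+\overline{g}$ with $g(0)=0$, so that $f_z=h'$ and $f_{\bar z}=\overline{g'}$. Since $u=\Re(h+g)$, the plan is to bound $|h'(0)|$ by $|h'(0)+g'(0)|$ and then bound $|h'(0)+g'(0)|$ by a local $L^p$ mean of $u$ near the origin.

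\textbf{Step 1 (quasiregularity).} By $K$-quasiregularity, $|g'(0)|\le k|h'(0)|$. Hence
\[
|h'(0)+g'(0)|\ \ge\ |h'(0)|-|g'(0)|\ \ge\ (1-k)|h'(0)|,
\]
that is, $|f_z(0)|\le (1-k)^{-1}|F'(0)|$ with $F:=h+g$ analytic. This produces the factor $(1-k)^{-p}$ in the statement.

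\textbf{Step 2 (derivative via $u$).} Since $F=u+i\tilde u$ is analytic with $\Re F=u$, we have $F'=u_x-iu_y=2u_z$, so $|F'(0)|=|\nabla u(0)|$. To bound $|\nabla u(0)|$ by values of $u$, I would use the Poisson representation on the circle $|z|=\rho$ for a fixed $\rho\in(0,1/6]$:
\[
F'(0)=\frac{1}{\pi\rho}\int_0^{2\pi}u(\rho e^{i\theta})e^{-i\theta}\,d\theta,
\]
which gives $|F'(0)|\le \frac{2}{\rho}\sup_{|z|=\rho}|u(z)|$. This is the same as the Cauchy estimate for $F$, but it uses only $u$.

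\textbf{Step 3 (pointwise to integral).} For each $w$ with $|w|=\rho$, apply Theorem \ref{thm-3.1} with $a=w$ and radius $r=1/3$. We have $|a|<1/3$, $D_{1/3}(w)\subset D_{1/2}(0)\subset\mathbb D$ because $\rho+1/3\le 1/2$, and $u$ is harmonic. This yields
\[
|u(w)|^p\le \frac{C_{p,\alpha}}{A_\alpha(D_{1/3}(w))}\int_{D_{1/2}(0)}|u|^p\,dA_\alpha .
\]
Moreover, $A_\alpha(D_{1/3}(w))$ is bounded below by a constant depending only on $\alpha$, by the comparability bound \eqref{H-K-P1-E-01}, since $1-|w|^2\approx 1$ there. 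Combining Steps 1--3 gives
\[
|f_z(0)|^p\le (1-k)^{-p}(2/\rho)^p\,C'_{p,\alpha}\int_{D_{1/2}(0)}|u|^p\,dA_\alpha ,
\]
and absorbing $(2/\rho)^p$ into the constant finishes the argument.

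\textbf{Main obstacle.} The delicate point is Step 3 when $0<p<1$, where one cannot simply average Poisson integrals in $L^p$. Using the supremum on a small circle together with the pointwise Fefferman--Stein-type bound of Theorem \ref{thm-3.1} sidesteps this. The remaining care is to check that the disks used satisfy the hypotheses $|a|<1/3$, $r\le 1/3$ of Theorem \ref{thm-3.1} and lie inside $D_{1/2}(0)$, which the choice $\rho\le 1/6$ ensures.
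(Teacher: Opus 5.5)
Your proposal is correct and follows essentially the same route as the paper. In both arguments, quasiregularity gives $|h'(0)|\le |F'(0)|/(1-k)$ with $F=h+g$. A gradient-type estimate then bounds $|F'(0)|$ by $\sup|u|$ near the origin, and Theorem \ref{thm-3.1}, together with a lower bound on the weighted area of small disks, converts this supremum into $\int_{D_{1/2}(0)}|u|^p\,dA_\alpha$. The only differences are cosmetic. You derive the gradient bound yourself from the first Fourier coefficient of $u$ on $|z|=\rho$ instead of citing the interior gradient estimate, and you use disks of radius $1/3$ centered on that circle rather than disks of radius $1/6$ centered in $D_{1/6}(0)$.
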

 
 \begin{proof}
 	Since $f$ is a harmonic mapping in the simply connected doamin $\mathbb{D}$, it can be expressed as $f = h + \overline{g}$, where $h$ and $g$ are analytic functions in $\mathbb{D}$, uniquely determined under the normalization $g(0) = 0$.
 	\vspace{0.5mm}
 	
 	Let \(F=h+g\). Clearly,	$F$ is holomorphic and $u=\Re f=\Re F$.
 	
 By the Cauchy--Riemann equations,
 \[
 |F'(z)|^2
 =u_x(z)^2+u_y(z)^2
 =|\nabla u(z)|^2.
 \]
 In particular,
 \[
 |F'(0)|=|\nabla u(0)|.
 \]
 	
 	We next estimate $|F'(0)|$. To this end, we invoke the following standard interior gradient estimate for harmonic functions (see \cite[p.17]{Pavlovic-FunctionClasses-2019}).
 	Suppose $u$ is a real-valued harmonic function in $\mathbb{D}$, and let $a \in \mathbb{D}$ and $r>0$ be such that $D_r(a)\subset \mathbb{D}$. Then there exists a constant $K$, independent of $a$ and $r$, such that
 	\[
 	|\nabla u(a)|
 	\leq
 	\frac{K}{r}
 	\sup\{|u(z)|: z\in D_r(a)\}.
 	\]
 	Taking $\varepsilon=1/2$, we have $(\varepsilon/3)\mathbb{D}\subset\mathbb{D}$. Hence, applying the above estimate with $a=0$ and $r=\varepsilon/3$, we obtain
 	\[
 	|F'(0)|
 	= |\nabla u(0)|
 	\le C
 	\sup_{|z|<\varepsilon/3}
 	|u(z)|,
 	\]
 	where $C>0$ is an absolute constant. Consequently,
 	\[
 	|F'(0)|^p
 	\le
 	C^p
 	\sup_{|z|<\varepsilon/3}|u(z)|^p.
 	\]
We now turn to estimating the quantity $\sup_{|z|<\varepsilon/3}|u(z)|^p$. For any $z$ in the disk $(\varepsilon/3)\mathbb{D}$, applying Theorem \ref{thm-3.1} for the disk $\mathbb{D}_{\frac \varepsilon 3}(z) \subset \varepsilon \mathbb{D}$, we get
 	\[
 	|u(z)|^p
 	\le
 	\frac{C_{p,\alpha,1}}
 	{A_\alpha(D_{\frac\varepsilon 3}(z))}
 	\int_{D_{\frac\varepsilon 3}(z)}
 	|u(\xi)|^p\, dA_\alpha(\xi).
 	\]
Since
 	$D_{\frac\varepsilon 3}(z)\subset \varepsilon\mathbb D,$
 	it follows from the preceding inequality that
 	\[
      |u(z)|^p
 	\le
 	\frac{C_{p,\alpha,1}}
 	{A_\alpha(D_{\frac\varepsilon 3}(z))}\,
 	\int_{\varepsilon\mathbb D}
 	|u(\xi)|^p\, dA_\alpha(\xi).
 	\]
 Now, from \eqref{H-K-P1-E-01}, we have 
 \begin{equation} \label{e-3.5-a}
 	\frac{1}{A_\alpha(D_{\frac\varepsilon 3}(z))} \leq \frac{1}{	C_1 (1+\alpha)\,(1-|z|^2)^\alpha A(D_{\frac\varepsilon 3}(z))}=\frac{1}{	C_1 (1+\alpha)\,(1-|z|^2)^\alpha \left(\frac{\varepsilon}{3}\right)^2}.
 \end{equation}
 Since $|z|<\varepsilon/3=\frac16$, we have
 \[
 \frac{35}{36}<1-|z|^2\le1.
 \]
 Therefore, there exist positive constants
 \[
 c_\alpha=\min\left\{1,\left(\frac{35}{36}\right)^\alpha\right\}
 \quad\text{and}\quad
 C_\alpha=\max\left\{1,\left(\frac{35}{36}\right)^\alpha\right\},
 \]
 depending only on $\alpha$, such that
 \[
 c_\alpha
 \le
 (1-|z|^2)^\alpha
 \le
 C_\alpha,
 \qquad |z|<\frac{\varepsilon}{3}.
 \]
Combining the above estimates together with \eqref{e-3.5-a} yields, there exists positive constant $C_{p,\alpha,2}$ such that
\[
|u(z)|^p
\le
C_{p,\alpha,2}
\,
\int_{\varepsilon\mathbb D}
|u(\xi)|^p\, dA_\alpha(\xi).
\]
Consequently, there exists positive constant $C_{p,\alpha,3}$, depending only on $p$ and $\alpha$, such that
 	\begin{equation}\label{H-K-P1-E-03}
 	|F'(0)|^p
 	\le
 	C_{p,\alpha,3}
 	\int_{\varepsilon\mathbb D}
 	|u(z)|^p\, dA_\alpha(z).
 	\end{equation}
Since $f=h+\overline{g}$ is $K$-quasiregular, the complex dilatation of $f$, defined by $\omega(z)=g'(z)/h'(z)$ satisfies $|\omega(z)|\le k$ for $z \in \mathbb{D}$, where $k=(K-1)/(K+1)$.  A simple computation shows that
 \[
 F'(z)
 = h'(z)+g'(z)
 = h'(z)\bigl(1+\omega(z)\bigr)
 \]
 and so,
 \[
 |F'(z)|
 = |h'(z)|\,|1+\omega(z)|
 \ge |h'(z)|\bigl(1-|\omega(z)|\bigr)
 \ge (1-k)|h'(z)|.
 \]
Evaluating at $z=0$ yields
 \[
 |h'(0)|
 \le \frac{|F'(0)|}{1-k}.
 \]
 Therefore,
 \begin{equation}\label{H-K-P1-E-04}
 |h'(0)|^p
 \le
 \frac{|F'(0)|^p}{(1-k)^p}.
 \end{equation}
By virtue of \eqref{H-K-P1-E-03} and  \eqref{H-K-P1-E-04}, we deduce that
 \[
 |h'(0)|^p
 \le
 \frac{C_{p,\alpha,3}}{(1-k)^p}
 \int_{\varepsilon\mathbb D}
 |u(z)|^p\, dA_\alpha(z).
 \]
Since $f_z=h'$, the desired estimate now follows, thereby completing the proof.
 \end{proof}

It is worth mentioning that Lemma \ref{lem-3.1} provides a weighted area analogue of \cite[Lemma 1]{Das-Rasila-PA-2026}.
 \vspace{1mm}

 We now prove another auxiliary result.
 \begin{lem} \label{lem-3.2}
 	Let $0 < p \le 1$ and $f = h + \overline{g}$ be a harmonic $K$-quasiregular mapping in $\mathbb{D}$. Then there exist positive constants $C_1$ and $C_2$ such that 
 	$$
 	\int_{\mathbb{D}} |f(z)|^p dA_{\alpha}(z) \leq \frac{C_1+C_2k^p}{1+\alpha} \int_{\mathbb{D}} |h'(w)|^p (1 - |w|^2)^p dA_{\alpha}(w).
 	$$
 \end{lem}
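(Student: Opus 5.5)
The plan is to split $f$ into its analytic and co-analytic parts, bound each part by a Littlewood--Paley type estimate for analytic functions in $A^p_\alpha$, and then use quasiregularity to express everything in terms of $h'$. I will work under the normalization $h(0)=g(0)=0$, which the statement implicitly requires: for a nonzero constant $f$ the right-hand side vanishes while the left does not. If only $g(0)=0$ is assumed, one would instead prove the inequality for $f-f(0)$.

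\emph{Splitting.} Since $0<p\le 1$, the elementary inequality $(a+b)^p\le a^p+b^p$ gives
\[
\int_{\mathbb D}|f|^p\,dA_\alpha\le \int_{\mathbb D}|h|^p\,dA_\alpha+\int_{\mathbb D}|g|^p\,dA_\alpha .
\]
This reduces the problem to two analytic functions vanishing at the origin.

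\emph{Analytic estimate.} For analytic $\varphi$ with $\varphi(0)=0$ I will use
\[
\int_{\mathbb D}|\varphi|^p\,dA_\alpha\le C_{p,\alpha}\int_{\mathbb D}|\varphi'(w)|^p(1-|w|^2)^p\,dA_\alpha(w).
\]
This is the one-sided half of the derivative characterization of $A^p_\beta$ already quoted from \cite[Theorem 4.28]{Zhu-OTFS-2007} in the proof of Lemma \ref{lem-2.1}, applied with $n=1$ and $\beta=\alpha$. The condition $\varphi(0)=0$ removes the point-evaluation term.

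To obtain a constant with the advertised explicit $\frac{1}{1+\alpha}$ form, I would instead prove the estimate directly in polar coordinates. Write $\varphi(z)=\int_0^1 \varphi'(tz)\,z\,dt$. For $p\le1$, combine this with the Hardy--Littlewood integral-mean inequality
\[
M_p^p(r,\varphi)\le C_p\int_0^r M_p^p(s,\varphi')\,(r-s)^{p-1}\,ds .
\]
Then integrate against $2(\alpha+1)r(1-r^2)^\alpha\,dr$ and apply Fubini. The inner integral $\int_s^1 (r-s)^{p-1}(1-r^2)^\alpha r\,dr$ is comparable to $(1-s)^{p+\alpha}$, with constants depending on $p,\alpha$. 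This produces the weight $(1-s^2)^p(1-s^2)^\alpha$, and the normalizing factor $(1+\alpha)$ of $dA_\alpha$ can be tracked to put the bound in the required form.

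\emph{Quasiregularity.} The dilatation bound $|g'(w)|\le k|h'(w)|$ gives
\[
\int_{\mathbb D}|g'(w)|^p(1-|w|^2)^p\,dA_\alpha(w)\le k^p\int_{\mathbb D}|h'(w)|^p(1-|w|^2)^p\,dA_\alpha(w).
\]
Adding the estimates for $h$ and $g$ then yields the claim with $C_1=C_2=C_{p,\alpha}$.

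The main obstacle is the analytic estimate in the range $p\le1$. Here subharmonicity-based shortcuts fail, and one needs the Hardy--Littlewood mean inequality for $p<1$, or Zhu's theorem, while keeping track of how the constant depends on $\alpha$. If an explicit form of the constant is not essential, I would simply cite Zhu's theorem and absorb $1+\alpha$ into $C_1,C_2$.
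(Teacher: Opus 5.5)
Your proposal is correct and follows the paper's proof: the subadditivity $(a+b)^p\le a^p+b^p$ for $p\le 1$, then Zhu's derivative estimate for $A^p_\alpha$ applied to $h$ and to $g$, then $|g'|\le k|h'|$. You are right that the normalization $h(0)=g(0)=0$ is needed, which the paper imposes only later as $f(0)=0$ in the proof of Theorem~\ref{thm-1.1}; constants are excluded by sense-preservation, but $f=c+z$ with $|c|\to\infty$ makes your point. Also, since $C_1,C_2$ may depend on $p$ and $\alpha$, the factor $\frac{1}{1+\alpha}$ is cosmetic and your Hardy--Littlewood detour is unnecessary.
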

 \begin{pf}
 Since both \(h\) and \(g\) are holomorphic in \(\mathbb D\), it follows from \cite[p. 85]{Zhu-OTFS-2007} that there exist positive constants \(C_1\) and \(C_2\), such that
 	
 	\[
 		\int_{\mathbb D} |h(z)|^{p}\, dA_{\alpha}(z)
 		\le
 		\frac{C_1}{1+\alpha}
 		\int_{\mathbb D} |h'(w)|^{p}(1-|w|^{2})^{p}\, dA_{\alpha}(w).
 	\]
 	
  	and 
 	
 	\[
 	\int_D |g(z)|^{p}\, dA_{\alpha}(z)
 	\le
 	\frac{C_2}{1+\alpha}
 	\int_\mathbb{D} |g'(w)|^{p}\,(1-|w|^{2})^{p}\, dA_{\alpha}(w).
 	\]
 	
\noindent Also, since \(0<p\le 1\), we have
 \[
 |f(z)|^p
 =
 |h(z)+\overline{g(z)}|^p
 \le
 \bigl(|h(z)|+|g(z)|\bigr)^p
 \le
 |h(z)|^p+|g(z)|^p.
 \]
Therefore,
 \begin{align*}
 	\int_{\mathbb D} |f(z)|^p\, dA_\alpha(z)
 	&\le
 	\int_{\mathbb D} |h(z)|^p\, dA_\alpha(z)
 	+
 	\int_{\mathbb D} |g(z)|^p\, dA_\alpha(z)\\
 	&\le
 	\frac{C_1}{1+\alpha}
 	\int_{\mathbb D}
 	|h'(w)|^p (1-|w|^2)^p
 	\, dA_\alpha(w)\\
 	&\qquad
 	+
 	\frac{C_2}{1+\alpha}
 	\int_{\mathbb D}
 	|g'(w)|^p (1-|w|^2)^p
 	\, dA_\alpha(w).
 \end{align*}
Using the estimate
 \(
 |g'(w)|\le k\,|h'(w)|,
 \)
 we obtain $|g'(w)|^p
 \le
 k^p |h'(w)|^p$. Hence,
 \begin{align*}
 	\int_{\mathbb D} |f(z)|^p\, dA_\alpha(z)
 	&\le
 	\frac{C_1}{1+\alpha}
 	\int_{\mathbb D}
 	|h'(w)|^p (1-|w|^2)^p
 	\, dA_\alpha(w)\\
 	&\qquad
 	+
 	\frac{C_2 k^p}{1+\alpha}
 	\int_{\mathbb D}
 	|h'(w)|^p (1-|w|^2)^p
 	\, dA_\alpha(w)\\
 	&=
 	\frac{C_1+C_2k^p}{1+\alpha}
 	\int_{\mathbb D}
 	|h'(w)|^p (1-|w|^2)^p
 	\, dA_\alpha(w).
 \end{align*}
This proves the theorem.
 \end{pf}

We are now in a position to prove Theorem \ref{thm-1.1}. The proof makes use of Theorem \ref{thm-3.1} together with Lemmas \ref{lem-3.1} and \ref{lem-3.2}.
 \subsection{Proof of Theorem \ref{thm-1.1}}
 We divide the proof into two cases according to the range of $p$, namely $p>1$ and $0<p\leq 1$.\\
 	\textbf{Case 1: When $p>1$.}\\
 	Since $f = u + iv$ is a harmonic $K$-quasiregular mapping in $\mathbb{D}$ and $1 < p < \infty$, by virtue of \cite{Liu-Zhu-AdvMath-2023}, there exists constant $C_{p,K}>0$, depending only on $p$ and $K$, such that 
 	\begin{equation} \label{e-3.8}
 	M_p(r, v) \le C_{p,K} M_p(r, u).
 	\end{equation}
 Using the definition of the weighted harmonic Bergman space norm $\| \cdot \|_{a^p_\alpha}$ and rewriting the normalized area measure in polar coordinates $dA(z) = \frac{1}{\pi} r \, dr \, d\theta$, we have
 $$
 \begin{aligned}
 		\|v\|^p_{a^p_\alpha} &= (1+\alpha)\,\int_{\mathbb{D}} |v(z)|^p (1 - |z|^2)^\alpha dA(z) \\
 		&= (1+\alpha)\,\int_{r=0}^{1} \int_{\theta=0}^{2\pi} |v(r e^{i\theta})|^p (1 - r^2)^\alpha \frac{r \, dr \, d\theta}{\pi} \\
 		&= 2\,(1+\alpha)\, \int_{r=0}^{1} \left( \frac{1}{2\pi} \int_{\theta=0}^{2\pi} |v(r e^{i\theta})|^p d\theta \right) (1 - r^2)^\alpha r \, dr \\
 		&= 2\,(1+\alpha)\, \int_{0}^{1} M_p^p(r, v) (1 - r^2)^\alpha r \, dr.
 	\end{aligned}
 $$
 Substituting \eqref{e-3.8} into the preceding identity yields
 $$\begin{aligned}
 		\|v\|^p_{a^p_\alpha} &\le 2\, (1+\alpha)\, \cdot C_{p,K}^p \int_{0}^{1} M_p^p(r, u) (1 - r^2)^\alpha r \, dr \\
 		&= C_{p,K}^p\, (1+\alpha)\, \cdot \int_{r=0}^{1} \int_{\theta=0}^{2\pi} |u(r e^{i\theta})|^p (1 - r^2)^\alpha \frac{r \, dr \, d\theta}{\pi} \\
 		&= C_{p,K}^p \|u\|_{A^p_\alpha}^p.
 	\end{aligned}$$
 This is exactly the desired conclusion.\\	
 	\noindent \textbf{Case 2: When $0<p\le 1$.}\\
 	Without loss of generality, we assume $f(0) = 0$. Since $f$ is harmonic, writting $f=h+\bar{g}$, we can see that $f_z(z) = h'(z)$.
 	
 	We shall make use of Lemma \ref{lem-3.1}, which states:
 	\begin{equation} \label{e-3.9}
 	|h'(0)|^p \le \left(\frac{1}{1-k}\right)^p C_{p,\alpha} \int_{\varepsilon\mathbb{D}} |u(z)|^p\, dA_\alpha(z),
 	\end{equation}
 	where $\varepsilon=1/2$. To generalize this pointwise bound to any arbitrary point $w \in \mathbb{D}$, we utilize the conformal automorphism $\sigma_w(z) = \frac{w - z}{1 - \bar{w}z}$. We define the following transformed composition mapping
 	$$f_{\sigma_w}(z) := (f \circ \sigma_w)(z) = u(\sigma_w(z)) + i v(\sigma_w(z)) = h_{\sigma_w}(z) + \overline{g_{\sigma_w}(z)}.$$Since $\sigma_w$ is conformal, $f_{\sigma_w}$ remains a harmonic $K$-quasiregular mapping in $\mathbb{D}$. A straightforward computation, using the chain rule evaluated at $z=0$ for the analytic component, gives
 	$$
 	|h_{\sigma_w}'(0)| = |h'(w)|(1 - |w|^2).
 	$$
 	Thus, taking $h=h_{\sigma_w}$ in \eqref{e-3.9}, we infer that
 	$$
 	|h'(w)|^p (1 - |w|^2)^p \le \left(\frac{1}{1-k}\right)^p C_{p,\alpha} \int_{\mathbb{D}_{\frac{1}{2}}(0)} |u(\sigma_w(z))|^p\, dA_\alpha(z).
 	$$
 Integrating both sides of the preceding inequality over $\mathbb{D}$ against $dA_\alpha(w)$ leads to
 \begin{equation}\label{H-K-P1-E-05}
 \int_{\mathbb{D}} |h'(w)|^p (1 - |w|^2)^p dA_\alpha(w) \le \left(\frac{1}{1-k}\right)^p C_{p,\alpha} \int_{\mathbb{D}} \left( \int_{\mathbb{D}_{\frac{1}{2}}(0)} |u(\sigma_w(z))|^p\, dA_\alpha(z) \right) dA_\alpha(w).
 \end{equation}
 Fix $w \in \mathbb{D}$ and set $\zeta = \sigma_w(z).$ Since $\sigma_w$ is an involution, $z = \sigma_w(\zeta).$ The standard identities are
 $$
 1 - |\sigma_w(\zeta)|^2 = \frac{(1 - |w|^2)(1 - |\zeta|^2)}{|1 - \bar{w}\zeta|^2}
 $$
 
  and
 $$
 |\sigma_w'(\zeta)|^2 = \frac{(1 - |w|^2)^2}{|1 - \bar{w}\zeta|^4}.
 $$
Hence
$$dA_\alpha(z) = (1+\alpha)\,(1 - |\sigma_w(\zeta)|^2)^\alpha |\sigma_w'(\zeta)|^2 \, dA(\zeta)=(1+\alpha)\,\frac{(1 - |w|^2)^{\alpha+2} (1 - |\zeta|^2)^\alpha}{|1 - \bar{w}\zeta|^{2\alpha+4}} \, dA(\zeta).
$$
Therefore
$$\int_{\mathbb{D}_{\frac{1}{2}}(0)} |u(\sigma_w(z))|^p \, dA_\alpha(z)=(1+\alpha)\,(1 - |w|^2)^{\alpha+2} \int_{D_\sigma(w, \frac{1}{2})} |u(\zeta)|^p \frac{(1 - |\zeta|^2)^\alpha}{|1 - \bar{w}\zeta|^{2\alpha+4}} \, dA(\zeta),
$$ 
 where$$D_\sigma\left(w, \frac{1}{2}\right) := \left\{\zeta : \sigma_w(\zeta) < \frac{1}{2}\right\}.$$
Substituting this into \eqref{H-K-P1-E-05}, we get 
\begin{align*}
	\int_{\mathbb{D}} |h'(w)|^p & (1 - |w|^2)^p dA_\alpha(w) \\ 
	&= \left(\frac{1}{1-k}\right)^p \tilde{C}_{p,\alpha} \int_{\mathbb{D}} \int_{D_\sigma(w, \frac{1}{2})} |u(\zeta)|^p \frac{(1 - |w|^2)^{2\alpha+2} (1 - |\zeta|^2)^\alpha}{|1 - \bar{w}\zeta|^{2\alpha+4}} \, dA(\zeta) \, dA(w).
\end{align*}
Applying Tonelli's theorem, we deduce that
\begin{align}\label{H-K-P1-E-06}
\int_{\mathbb{D}} & \notag |h'(w)|^p  (1 - |w|^2)^p dA_\alpha(w) \\ 
& = \left(\frac{1}{1-k}\right)^p \tilde{C}_{p,\alpha} \int_{\mathbb{D}} |u(\zeta)|^p (1 - |\zeta|^2)^\alpha \left[ \int_{D_\sigma(\zeta, \frac{1}{2})} \frac{(1 - |w|^2)^{2\alpha+2}}{|1 - \bar{w}\zeta|^{2\alpha+4}} \, dA(w) \right] dA(\zeta).
\end{align}

If $w \in D_\sigma\left(\zeta, \frac{1}{2}\right)$, then the standard pseudo hyperbolic comparability relations hold (see \cite[p. 69]{Zhu-OTFS-2007}), \textit{i.e.}, $1 - |w|^2 \approx 1 - |\zeta|^2$ and $|1 - \bar{w}\zeta| \approx 1 - |\zeta|^2$. So, we can write, 
$$
\frac{(1 - |w|^2)^{2\alpha+2}}{|1 - \bar{w}\zeta|^{2\alpha+4}} \le C_1 \frac{1}{(1 - |\zeta|^2)^2},
$$where $C_1$ is some positive constant. Also, from \cite[p. 69]{Zhu-OTFS-2007}, $$A \left(D_\sigma\left(\zeta, \frac{1}{2}\right)\right) \leq C_2 (1-|\zeta|^2)^2$$for some positive constants $C_2$.
 Therefore,
 $$\int_{D_\sigma(\zeta, \frac{1}{2})} \frac{(1 - |w|^2)^{2\alpha+2}}{|1 - \bar{w}\zeta|^{2\alpha+4}} \, dA(w) \le \frac{A \left(D_\sigma\left(\zeta, \frac{1}{2}\right)\right)}{(1 - |\zeta|^2)^2} \le C.$$
Now, putting these estimates in \eqref{H-K-P1-E-06}, we have
\begin{align*}
\int_{\mathbb{D}}  |h'(w)|^p  (1 - |w|^2)^p dA_\alpha(w) \leq & \left(\frac{1}{1-k}\right)^p C_{p,\alpha}' \int_{\mathbb{D}} |u(\zeta)|^p (1 - |\zeta|^2)^\alpha dA(\zeta) \\
\leq  & \left(\frac{1}{1-k}\right)^p C_{p,\alpha}'' ||u||_{a_{p}^\alpha}^p.
\end{align*}
By making use of Lemma \ref{lem-3.2} together with the preceding inequality, we arrive at
$$
\|f\|_{a^p_\alpha}^p = \int_{\mathbb{D}} |f(z)|^p dA_{\alpha}(z) \le \left( \frac{C_1+C_2k^p}{1+\alpha} \cdot \left(\frac{1}{1-k}\right)^p C_{p,\alpha}'' \right) \|u\|_{a^p_\alpha}^p.
$$Since $|v(z)| \le |f(z)|$ holds pointwise everywhere for the imaginary component, it directly follows that
$$\|v\|_{a^p_\alpha}^p = \int_{\mathbb{D}} |v(z)|^p dA_{\alpha}(z) \le \int_{\mathbb{D}} |f(z)|^p dA_{\alpha}(z) \le C^p_{p,K,\alpha} \|u\|_{a^p_\alpha}^p,
$$
where the constant $C^p_{p,K,\alpha}>0$, depends only on $p,K$, and $\alpha$.
Taking the $p$-th root on both sides leads to $\|v\|_{a^p_\alpha} \le C_{p,K,\alpha} \|u\|_{a^p_\alpha}$, which is precisely the desired estimate.
 
 \subsection{Proof of Theorem \ref{thm-1.2}}

	Let $f \in a_{\alpha}^p$ be given by its canonical decomposition $f(z) = h(z) + \overline{g(z)}$, where $h$ and $g$ are holomorphic functions on the open unit disk $\mathbb{D}$. Since $f$ is harmonic in $\mathbb{D}$, it is continuous in $\mathbb{D}$. Fix $r\in[0,1)$. Then the closed disk $\overline{\mathbb{D}}_r=\{z\in\mathbb{C}:|z|\le r\}$	is compact and contained in $\mathbb{D}$. Hence, by continuity, $f$ is bounded on $\overline{\mathbb{D}}_r$. Let
	\[
	M_r:=\max_{|z|\le r}|f(z)|<\infty.
	\]
	
	Now define the dilated function
	\[
	f_r(z):=f(rz), \qquad z\in\mathbb{D}.
	\]
	Since the composition of a harmonic function with a dilation is harmonic, $f_r$ is harmonic in $\mathbb{D}$.
	
	To show that $f_r\in h^p$, let $0<\rho<1$. Since $|r\rho e^{i\theta}|=r\rho<r$,
	we have
	\[
	|f_r(\rho e^{i\theta})|
	=
	|f(r\rho e^{i\theta})|
	\le
	M_r
	\]
	for every $\theta\in[0,2\pi]$. Therefore,
	\[
	\frac{1}{2\pi}\int_0^{2\pi}
	|f_r(\rho e^{i\theta})|^p\,d\theta
	=
	\frac{1}{2\pi}\int_0^{2\pi}
	|f(r\rho e^{i\theta})|^p\,d\theta
	\le
	M_r^p.
	\]
	Taking the supremum over $0<\rho<1$, we obtain
	\[
	\|f_r\|_{h^p}^p
	=
	\sup_{0<\rho<1}
	\frac{1}{2\pi}\int_0^{2\pi}
	|f_r(\rho e^{i\theta})|^p\,d\theta
	\le
	M_r^p
	<
	\infty.
	\]
	Hence, $f_r\in h^p$.
		
An application of \cite[Theorem 1.1]{Melentijevic-Bozin-PA-2021} to $f_r$ reveals that
\begin{equation*}
\int_{-1}^{1} |f(rx\, e^{it})|^p \, dx \leq C_p \int_{0}^{2\pi} |f(r e^{i\theta})|^p \, d\theta
\end{equation*}
	for all $t \in \mathbb{R}$, where $C_p=\dfrac{1}{2}\sec^{p}\!\left(\dfrac{\pi}{2p}\right)$ for $1<p<\infty$.
%\[	C_p=	\begin{cases}
%\dfrac{1}{2}\sec^{p}\!\left(\dfrac{\pi}{2p}\right), & \text{if } 1<p\le 2,\\[1.2ex]
%1, & \text{if } p\ge 2.
%\end{cases}
%\]
 We multiply both sides of the above inequality by $\frac{(1+\alpha)}{\pi} r (1 - r^2)^{\alpha}$ and integrate with respect to $r$ from $0$ to $1$, we obtain
 \begin{equation} \label{e-3.12-a}
 	\frac{(1+\alpha)}{\pi} \int_{0}^{1} r(1 - r^2)^{\alpha} \left( \int_{-1}^{1} |f(rx\,e^{it})|^p \, dx \right) dr \leq C_p \int_{\mathbb{D}} |f(z)|^p \, dA_{\alpha}(z) .
 \end{equation}
By substituting $u = rx$ to the left-hand side of the above inequality, we get
 $$\frac{(1+\alpha)}{\pi} \int_{0}^{1} (1 - r^2)^{\alpha} \left( \int_{-r}^{r} |f(u)|^p \, du \right) dr.
 $$
By using Fubini's theorem, we rewrite this as
\begin{equation} \label{e-3.13}
	\frac{ (1+\alpha)}{\pi} \int_{-1}^{1} |f(u)|^p \left( \int_{|u|}^{1} (1 - r^2)^{\alpha} \, dr \right) du.
\end{equation}
We shall now estimate the inside integral. Let
$$
I:=\int_{|u|}^{1} (1 - r^2)^{\alpha} \, dr.
$$ 
Using the fact $(1-r^2)^\alpha>r\,(1-r^2)^\alpha$, we see that
$$
I\geq  \int_{|u|}^{1} (1 - r^2)^{\alpha} \,r\, dr = \frac{1}{2\,(1+\alpha)} (1 - u^2)^{\alpha+1}.
$$
Since $(1-u^2)^{1+\alpha} \geq (1-|u|)^{1+\alpha}$ for $u \in [0,1]$, using the above estimate, from \eqref{e-3.12-a} and \eqref{e-3.13}, we deduce that 
$$\int_{-1}^{1} (1 - |u|)^{\alpha+1} |f(u)|^p \, du \leq 2\, \pi \, C_p \int_{\mathbb{D}} |f(z)|^p \, dA_{\alpha}(z).$$
This concludes the proof.

 \subsection{Proof of Theorem \ref{thm-1.3}}

 	For $\zeta \in \partial\mathbb{D}$, define $f_\zeta(w) = f(\zeta w)$. Since the change of variables $z = \zeta w$ has unit Jacobian and preserves $|z|$, the measure $dA_\alpha$ is invariant under this rotation. Consequently, $f_\zeta \in a_\alpha^2$ with $\|f_\zeta\|_{a_\alpha^2} = \|f\|_{a_\alpha^2}$. Since $f_\zeta(x) = f(x\zeta)$ for $x \in (0,1)$, it suffices to establish the inequality at $\zeta = 1$ and apply it to $f_\zeta$. Henceforth we assume $\zeta = 1$.\\[2mm]
 	Every $f \in a_\alpha^2$ admits a unique harmonic expansion of the form 
 	$$f(z) = \sum_{n=1}^\infty a_n z^n + \sum_{n=1}^\infty b_n \bar{z}^n. 
 	$$
 Since $dA_\alpha$ is radial, the family $\{1, z^n, \bar{z}^n : n \in \mathbb{N}\}$ is orthogonal in $L^2(\mathbb{D}, dA_\alpha)$. Indeed, for $n \geq 0$ and $m \geq 1$
 $$
 \int_\mathbb{D} z^n \overline{\bar{z}^m} \, dA_\alpha(z) =(1+\alpha)\, \int_0^1 r^{n+m+1}(1-r^2)^\alpha \, dr \int_0^{2\pi} \frac{e^{i(n+m)\theta}}{\pi} \, d\theta = 0,
 $$
 since $n+m \geq 1$. The notation $L^2(\mathbb{D}, dA_\alpha)$ is adopted from Zhu's book \cite{Zhu-OTFS-2007}. Setting $\beta_n = \|z^n\|_{A_\alpha^2}^2$, a polar-coordinate computation with the Beta function yields
 $$\beta_n = \frac{\Gamma(n+1) \, \Gamma(\alpha+2)}{\Gamma(n+\alpha+2)}, \quad n \geq 0,$$
 and the Parseval-type identity 
 $$\|f\|_{a_\alpha^2}^2 = \sum_{n=1}^\infty \beta_n (|a_n|^2 + |b_n|^2).$$
Write $f = f_1 + if_2$, where$$f_1 = \frac{1}{2}(f + \bar{f}), \quad f_2 = \frac{1}{2i}(f - \bar{f})$$are real-valued harmonic functions on $\mathbb{D}$. Because $f_1, f_2$ are real-valued, $|f(z)|^2 = f_1(z)^2 + f_2(z)^2$ pointwise, and therefore
$$\|f\|_{a_\alpha^2}^2 = \|f_1\|_{a_\alpha^2}^2 + \|f_2\|_{a_\alpha^2}^2. $$
It thus suffices to prove the inequality, with the constant $2\, \lambda_{\alpha}$, for an arbitrary real-valued $f \in a_\alpha^2$. 

Assume $f$ is real-valued. Then $\bar{f}(z) = f(z)$ forces $b_n = \bar{a}_n$ in the expansion of $f$, so that
$$
f(z) = \sum_{n=1}^\infty a_n z^n + \sum_{n=1}^\infty \bar{a}_n \bar{z}^n. $$
Define the holomorphic lift
$$F(z) = 2\sum_{n=1}^\infty a_n z^n, \quad z \in \mathbb{D}. $$ 

For $x \in (0,1) \subset \mathbb{R}$, we have $$f(x) =  2\sum_{n=1}^\infty \text{Re}(a_n) \, x^n = \text{Re } F(x),$$
Hence
\begin{equation} \label{e-3.14}
	|f(x)| \leq |F(x)| \quad \text{for every } x \in (0,1).
\end{equation}
Now, if $b_n = \bar{a}_n$, we have $$\|f\|_{a_\alpha^2}^2 =  2\sum_{n=1}^\infty \beta_n |a_n|^2,$$while, by the orthogonality of $\{z^n\}$ in $A_\alpha^2$,$$\|F\|_{A_\alpha^2}^2 =  4\sum_{n=1}^\infty \beta_n |a_n|^2.$$
Comparing the two expressions, $\|F\|_{A_\alpha^2}^2 =  2\|f\|_{a_\alpha^2}^2$. In particular, $F \in A_\alpha^2$. Applying \cite[Theorem 1]{Andreev-Palermo-2012} to $F$ and then from \eqref{e-3.14}, we obtain
 $$\int_0^1 |f(x)|^2 (1 - x)^{\alpha+1} \, dx \leq \int_0^1 |F(x)|^2 (1 - x)^{\alpha+1} \, dx \leq \lambda_\alpha \|F\|_{A_\alpha^2}^2 = 2\lambda_\alpha \|f\|_{a_\alpha^2}^2. $$
 
 For a general complex-valued $f \in a_\alpha^2$, decompose $f = f_1 + if_2$ and using above inequality for each real-valued component, we have
 $$\int_0^1 |f_j(x)|^2 (1 - x)^{\alpha+1} \, dx \leq 2\lambda_\alpha \|f_j\|_{a_\alpha^2}^2, \quad j=1,2.$$
 Adding these inequalities and using identity $|f|^2 = f_1^2 + f_2^2$, we get
 $$\int_0^1 |f(x)|^2 (1 - x)^{\alpha+1} \, dx \leq 2\lambda_\alpha \left(\|f_1\|_{a_\alpha^2}^2 + \|f_2\|_{a_\alpha^2}^2\right) = 2\lambda_\alpha \|f\|_{a_\alpha^2}^2.
 $$
 Hence, for all $\zeta \in \partial \mathbb{D}$, the desired inequality follows, thereby completing the proof.

%	\noindent\textbf{Acknowledgement:}  The second named author thank CSIR for the support. 
	\vspace{1.5mm}
	
	\noindent\textbf{Compliance of Ethical Standards:}\\
	\noindent\textbf{Conflict of interest.} The authors declare that there is no conflict  of interest regarding the publication of this paper.
	\vspace{1.5mm}
	
	\noindent\textbf{Data availability statement.}  Data sharing is not applicable to this article as no datasets were generated or analyzed during the current study.\vspace{1.5mm}
	
	\noindent\textbf{Authors contributions.} The authors are listed in alphabetical order by surname. Both authors made equal contributions to the research, the derivation of the results, and the writing and preparation of the manuscript.

\end{document}